\documentclass[a4paper]{amsart}
\usepackage{amssymb, amsmath, amsthm}
\usepackage{stmaryrd, esint}
\usepackage{graphicx}
\usepackage{wrapfig}
\usepackage[pagewise]{lineno}

\DeclareGraphicsExtensions{.png}

\newtheorem{theorem}{Theorem}[section]
\newtheorem{lemma}[theorem]{Lemma}

\newtheorem{corollary}[theorem]{Corollary}

\theoremstyle{definition}
\newtheorem{definition}{Definition}[section]
\newtheorem{example}[definition]{Example}

\theoremstyle{remark}
\newtheorem{remark}[definition]{Remark}

\numberwithin{equation}{section}

\newcommand{\abs}[1]{\lvert#1\rvert}
\newcommand{\Abs}[1]{\left\lvert#1\right\rvert}

\newcommand{\tr}[1]{\bigr\vert_{\scriptscriptstyle{#1}}}
\newcommand{\A}{\mathcal{A}}

\newcommand{\G}{\mathcal{G}}
\newcommand{\h}{\mathcal{H}}
\newcommand{\I}{\mathcal{I}}

\newcommand{\R}{\mathbb{R}}

\newcommand{\C}{\mathbb{C}}

\newcommand{\N}{\mathbb{N}}
\newcommand{\Z}{\mathbb{Z}}

\DeclareMathOperator{\reg}{reg}
\DeclareMathOperator{\sing}{sing}

\DeclareMathOperator{\dist}{dist}
\DeclareMathOperator{\diam}{diam}
\title[Examples of holomorphic functions]{Examples of holomorphic functions vanishing to infinite order at the boundary}

\author[J.Hirsch]{Jonas Hirsch}

\begin{document}

\begin{abstract}
We present examples of holomorphic functions that vanish to infinite order at points at the boundary of their domain of definition. They give rise to examples of Dirichlet minimizing $Q$-valued functions indicating that "higher"-regularity boundary results are difficult. Furthermore we discuss some implication to branching and vanishing phenomena in the context of minimal surfaces, $Q$-valued functions and unique continuation.
\end{abstract}
\maketitle
\section*{Introduction} 
\label{sec_E:introduction}
In general branching phenomena are of interest in geometric measure theory, geometry and are strongly related to vanishing phenomena in the context of PDE's. There is a vast literature on branching results in the interior and one has plenty of unique continuation results for PDE's in the interior of their domains of definition. \\
One of the most fundamental examples is the following: two holomorphic functions that agree to infinite order at one point in the interior have to be identical.\\
Almgren's frequency function provides  a quite robust tool in order to capture branching (e.g. \cite{Almgren}) and unique continuation properties, (e.g. \cite{GaLi2}, \cite{GaLi}). Its monotone behaviour is crucial.

In this paper we construct examples which show that these interior results does not carry over to the boundary case:\\
There are holomorphic functions that vanish to infinite order at boundary points but which are not identically zero. Furthermore Almgren's frequency functions fails to be monotone at these points.

A further property of Almgren's frequency functions is that it enables a stratification procedure (compare the "dimension reduction" argument of Federer \cite{Federer}). As an outcome one was able to bound the Hausdorff dimension of the singular set of solutions to various PDE's. For instance \cite{Almgren}, \cite[section 3.4 - 3.6]{Lellis}, \cite{SimonWick}, \cite{KrummelWick} consider geometric problems, \cite{CheegerNaberValtorta}, \cite{NaVa1} analyse the singular set of elliptic PDEs.\\
Our examples suggest that such an upper bound fails to hold up to the boundary. \\

For instance there is a $Q$-valued function ($Q\ge 2$) on the half plane $\R^2_+$ that is Dirichlet minimizing with respect to compact deformations. It has a "smooth" trace on $\partial \R^2$. Nevertheless the Hausdorff dimension of the closure of the singular set is $1$. In contrast the singular set of an arbitrary $Q$-valued Dirichlet minimizer consists of isolated points in any proper subset of its domain of definition, compare \cite[Theorem 0.12]{Lellis}. More details and the precise statement can be found in section \ref{sub:Q-valued functions}.\\

The main ingredient of the presented examples to branching phenomena of minimal surfaces, $Q-$valued Dirichlet minimizers and unique continuations results are examples of holomorphic functions on the half plane $\C_+ = \{ z \in \C \colon \Re(z)>0 \}$ that admit $C^\infty$-extension to $\overline{\C_+}$with the following additional properties:\\

\begin{lemma}\label{lem_E:1.1}
	Let $0<s\le 1$ be given. There exist 
	\begin{itemize}
		\item[(i)] a nowhere dense compact Cantor type  subset $E_s\subset [0,1]$ with $\h^s(E_s)=1$ if $0<s<1$ and $dim_{\h}(E_1)=1$;
		\item[(ii)] holomorphic functions $F(z)$, $G(z)$ on $\C_+$ with the property that $f(z)=e^{-F(z)}$, $g(z)= G(z)e^{-F(z)}$ admit $C^\infty$-extensions to $\overline{\C_+}$. Moreover, $f,g$ vanish to infinite order at any $z \in -iE_s$ and for every $z \in -iE_s$ there is a sequence $z_k \in \C_+$ with $z_k \to z$ and $g(z_k)=0$ for all $k$. 
	\end{itemize}
\end{lemma}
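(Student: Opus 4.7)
The plan is to carry out the construction in four steps: build $E_s$, define $F$ as a Cauchy-type integral with a second-order kernel, verify the decay/growth estimates needed to trade blow-up of $\Re F$ against blow-up of derivatives $F^{(k)}$, and finally construct $G$ as a half-plane Blaschke product with zeros accumulating on $-iE_s$.

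For (i), I will take $E_s$ to be a generalized Cantor set obtained by iterated removal of middle intervals from $[0,1]$. The contraction ratios $\lambda_n$ at each stage will be tuned so that for $0<s<1$ the natural self-similar probability measure $\mu$ on $E_s$ is Ahlfors $s$-regular, i.e.\ $\mu(B_r(y_0))\asymp r^s$ for every $y_0\in E_s$ and all small $r$, which in particular gives $\h^s(E_s)=1$; for $s=1$ I will let $\lambda_n\nearrow\tfrac12$ just slowly enough that $\dim_{\h}(E_1)=1$ while $\abs{E_1}=0$, and pick $\mu$ supported on $E_1$ with $\mu(B_r(y_0))\gtrsim r/\log^M(1/r)$ uniformly in $y_0\in E_1$ for a suitably large constant $M$.

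For the first part of (ii), I set
$$F(z)\;=\;-\int_0^1\frac{d\mu(t)}{(z+it)^2},\qquad z\in\C_+ ,$$
which is manifestly holomorphic on $\C_+$. Writing $z=x+iy$ and $\eta=y+y_0$ for $y_0\in E_s$, one computes
$$\Re F(z)\;=\;\int\frac{(\eta+(t-y_0))^2-x^2}{(x^2+(\eta+(t-y_0))^2)^2}\,d\mu(t).$$
Setting $r=|z-(-iy_0)|=\dist(z,-iE_s)$ and splitting the integral over $\{|t-y_0|\le r\}$ versus $\{|t-y_0|>r\}$, together with the mass bound for $\mu$, will give $\Re F(z)\gtrsim r^{s-2}$ when $0<s<1$ (respectively $\gtrsim (r\log^M(1/r))^{-1}$ when $s=1$), while the analogous decomposition applied to $F^{(k)}(z)=(-1)^{k+1}(k+1)!\int(z+it)^{-(k+2)}\,d\mu(t)$ yields $|F^{(k)}(z)|\lesssim r^{s-2-k}$. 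The superpolynomial blow-up of $\Re F$ beats the polynomial blow-up of each $F^{(k)}$, so the chain-rule expression for $\partial_z^k(e^{-F})$ — a polynomial in $F',\dots,F^{(k)}$ times $e^{-F}$ — is dominated by $e^{-\Re F}$ and tends to $0$ as $z\to -iy_0$; hence $f=e^{-F}$ vanishes to infinite order at every point of $-iE_s$. At a boundary point $iy^*\notin -iE_s$ the kernel $(z+it)^{-2}$ is uniformly smooth and bounded for $t\in E_s$ on a $\C_+$-neighborhood of $iy^*$, since $\dist(iy^*,-iE_s)>0$; so $F$ extends real-analytically across such points and $f$ admits the required $C^\infty$-extension to $\overline{\C_+}$.

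For the second part of (ii), I pick a countable dense subset $\{y_n\}\subset E_s$ and set $z_n=2^{-n}-iy_n\in\C_+$. By density, $\{z_n\}$ accumulates at every point of $-iE_s$; and $\sum_n\Re z_n<\infty$, so the half-plane Blaschke product
$$G(z)\;=\;\prod_n\frac{z-z_n}{z+\overline{z_n}}$$
converges to a bounded holomorphic function on $\C_+$ with $|G|\le 1$ and zero set exactly $\{z_n\}$. Since the derivatives of $G$ grow at most polynomially in $1/\dist(z,-iE_s)$ while $|e^{-F}|$ decays like $\exp(-c\,\dist(z,-iE_s)^{s-2})$, the product $g=G\,e^{-F}$ inherits the $C^\infty$-extension and infinite-order vanishing on $-iE_s$ from $f$, while retaining the zeros $z_n\to -iy_n$. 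I expect the most delicate step to be the lower bound on $\Re F$: one must show that the second-order kernel $(z+it)^{-2}$ produces a strictly positive, superpolynomial blow-up at every $y_0\in E_s$ rather than only $\mu$-almost everywhere, which is precisely why one cannot use the first-order Cauchy kernel (whose real part vanishes identically on the imaginary axis).
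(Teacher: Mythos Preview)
Your strategy---Cauchy-type potential for $F$, Blaschke product for $G$---is genuinely different from the paper's (the paper uses a discrete sum $F(z)=\sum_\tau a_k(z+iy_\tau)^{-\alpha_k}$ with $0<\alpha_k<1$ and an infinite product $G(z)=\prod_\tau\cos(b_k\ln(z+iy_\tau))$), and the Blaschke-product idea for $G$ is perfectly reasonable. But the construction of $F$ has a real gap.

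The problem is the exponent $2$ in your kernel. For $w\in\overline{\C_+}\setminus\{0\}$ write $w=re^{i\theta}$, $|\theta|\le\pi/2$; then $\Re(w^{-\alpha})=r^{-\alpha}\cos(\alpha\theta)$, which is \emph{positive throughout} $\overline{\C_+}$ precisely when $0<\alpha<1$, whereas for $\alpha=2$ it is negative on the sectors $|\theta|>\pi/4$. So with $F(z)=-\int(z+it)^{-2}\,d\mu(t)$ the integrand in your formula for $\Re F$ changes sign, and the claimed lower bound $\Re F(z)\gtrsim r^{s-2}$ is not merely ``delicate''---it is false in general. Concretely, approach $-iy_0\in -iE_s$ along the normal, i.e.\ $z=x-iy_0$, $x\to 0^+$. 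Then
\[
\Re F(z)=\int\frac{(t-y_0)^2-x^2}{\bigl(x^2+(t-y_0)^2\bigr)^2}\,d\mu(t).
\]
Modelling the $s$-regular measure by $d\mu=s\,|t-y_0|^{s-1}\,dt$ and substituting $u=(t-y_0)/x$ gives $\Re F\sim s\,x^{\,s-2}\int_0^\infty\frac{u^2-1}{(1+u^2)^2}\,u^{s-1}\,du$, and a Beta-function computation shows this integral equals $\dfrac{\pi(s-1)}{2\sin(\pi s/2)}$, which is \emph{negative} for every $0<s<1$. Thus along the normal approach $\Re F(z)\to-\infty$ and $|f(z)|=e^{-\Re F(z)}\to+\infty$: $f$ blows up instead of vanishing. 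The actual Cantor measure has the same scaling, so you cannot expect the sign to flip. (For $s=1$ the model integral vanishes identically, so the sign is determined by lower-order terms and is equally uncontrolled.)

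This is exactly why the paper takes fractional exponents $\alpha_k\in(0,1)$: then every summand $a_k\Re\bigl((z+iy_\tau)^{-\alpha_k}\bigr)=a_k r_\tau^{-\alpha_k}\cos(\alpha_k\theta_\tau)\ge a_k\cos(\alpha_k\pi/2)\,r_\tau^{-\alpha_k}>0$ on $\overline{\C_+}$, so $\Re F$ is a sum of positive terms and one can extract a single dominant term $r_{\tau_k}^{-\alpha_k}$ at the correct scale to force $\Re F(z)+m\ln d\to+\infty$. If you want to keep the integral form, the fix is the same: replace $(z+it)^{-2}$ by $(z+it)^{-\alpha}$ for a fixed $\alpha\in(s,1)$ (or, for $s=1$, by a kernel with $\alpha=\alpha(t)\nearrow 1$ slowly), so that the real part is termwise positive on $\overline{\C_+}$; then the dyadic mass bound on $\mu$ gives $\Re F(z)\gtrsim d^{-(\alpha-s)}$ directly, with no cancellation to worry about.
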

These functions are constructed similar to the Weierstrass' function, an example of a continuous but nowhere differentiable function. Instead of an infinite series we use infinite products of the following holomorphic \emph{building blocks}:
\begin{align}\label{eq:0.101}
	a(z)&=e^{-z^{-\alpha}} \text{ for } 0<\alpha <1\\\nonumber
	b(z)&=\cos(\ln(z)) e^{-z^{-\alpha}} \text{ for } 0<\alpha<1.
\end{align}\\

The paper is organized as follows:\\
In section \ref{sec:construction_of_the_sets_e_s_} we present a Cantor type set $E_s$ that will be the "singular"/"vanishing" set of the holomorphic functions.\\
In section \ref{sec:construction_of_the_holomorphic_functions} the holomorphic functions are constructed and their claimed properties are proven.\\
Finally in section \ref{sec:applications} we present applications of these functions to branching results of minimal surfaces, $Q$-valued functions and unique continuation type results.
\section*{Acknowledgements}
My most sincere thanks go to Camillo De Lellis and Emanuele Spadaro for their insights and stimulating discussions. Their knowledge and expertise on a vast number of topics was invaluable. Moreover I want to thank Neshan Wickramasekera for the initial idea. Tobias Lamm for helpful and encouraging feedback. Finally, I thank the many other people who I have failed to mention in this abbreviated list and who helped me along the way.


\section{Construction and properties of the set $E_s$} 
\label{sec:construction_of_the_sets_e_s_}
The construction of $E_s$ is a classical Cantor type construction. Nonetheless for the sake of completeness and to fix certain parameters we present the construction in detail. We follow closely an approach of Falconer in \cite[Theorem 1.15]{Falconer}. We will use the following the definition of the ("unnormalised") Hausdorff measure, \cite[Section 1.2]{Falconer}:
Given a subset $E \subset \R^n$, a (countable) family $\mathcal{U}=\{U_j\}_{j \in \mathcal{J}}$ is called $\delta$-cover of $E$, if $E\subset \bigcup_{U \in \mathcal{U}} U$ and $\diam(U)<\delta$. We define
\[ \mathcal{H}^s_\delta(E):=\left\{ \sum_{U \in \mathcal{U}} \diam(U)^s \colon \mathcal{U} \text{ $\delta$-cover of $E$}\right\}.\]
$\mathcal{H}^s_\delta$ defines an outer measure on $\R^n$ and the $s$-dimensional Hausdorff measure is defined to be
\[ \mathcal{H}^s(E):= \sup_{\delta>0} \mathcal{H}_\delta^s(E). \]
A subset $E$ is said to have Hausdorff dimension $s$ if $s=\sup\{ \sigma>0 \colon  \mathcal{H}^{\sigma}(E)=+\infty \}$.

\begin{lemma}\label{lem_E:2.1}
Let $0<s\le 1$ be given. Then there is a nowhere dense compact subset $E_s \subset [0,1]$ s.t. $\mathcal{H}^s(E_s)=1$ if $0<s<1$ and $dim(E_1)=1$. 
\end{lemma}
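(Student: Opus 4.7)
The plan is to follow the classical Falconer-type construction of a self-similar Cantor set, tuned so that the natural mass distribution has total mass exactly one with respect to $s$-dimensional measure; the case $s=1$ is handled separately by a countable union.

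\textbf{Case $0<s<1$.} First I would fix an integer $N\ge 2$ and a ratio $r\in(0,1/N)$ satisfying the balance condition $N r^s=1$. The strict inequality $r<1/N$ is used to keep a definite gap between the children of every parent interval, which ensures that the limiting set is totally disconnected and in particular nowhere dense. Starting from $E_0=[0,1]$, I would construct $E_k$ inductively as a union of $N^k$ closed intervals of length $r^k$, each parent containing exactly $N$ evenly spaced children, and define $E_s=\bigcap_k E_k$. The set is compact (decreasing intersection of compacts), nonempty, and nowhere dense by the gap condition.

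Next I would equip $E_s$ with the natural probability mass distribution $\mu$ obtained as the weak-$*$ limit of uniform measures on the $N^k$ level-$k$ intervals, i.e.\ $\mu(I_{k,j})=N^{-k}$ for each level-$k$ interval $I_{k,j}$. The upper bound $\mathcal{H}^s(E_s)\le 1$ is immediate from the natural cover by level-$k$ intervals: $\sum_j |I_{k,j}|^s = N^k r^{ks}=1$, valid for all $k$, so $\mathcal{H}^s_\delta(E_s)\le 1$ for $\delta=r^k\to 0$. For the matching lower bound $\mathcal{H}^s(E_s)\ge 1$ I would invoke the mass distribution principle: for any interval $U$ with $r^{k+1}\le |U|<r^k$, an elementary counting argument (exploiting the gap $(1-Nr)r^k/(N-1)$ between consecutive level-$k$ children inside a common level-$(k-1)$ parent) shows that $U$ meets at most a controlled number of level-$k$ intervals, giving an estimate of the form $\mu(U)\le c|U|^s$. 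The delicate point is to push this through to $c=1$ rather than merely a finite constant; this is the main obstacle and is where the specific choice $Nr^s=1$ together with the precise spacing of the children must be exploited so that the bound $\mu(U)\le |U|^s$ holds for every subinterval $U\subset [0,1]$.

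\textbf{Case $s=1$.} A self-similar construction with $N r=1$ would fill up $[0,1]$, so I would instead produce $E_1$ as a countable union. Picking any sequence $s_k\nearrow 1$, I would apply the construction above to obtain compact sets $E_{s_k}\subset[0,1]$ with $\mathcal{H}^{s_k}(E_{s_k})=1$, rescale and translate them into pairwise disjoint subintervals of $[0,1]$ of lengths $\to 0$ accumulating only at a single point, and set $E_1$ equal to the union together with that accumulation point. The countable union is compact and nowhere dense by construction, and $\dim_{\mathcal{H}}(E_1)\ge \dim_{\mathcal{H}}(E_{s_k})=s_k$ for every $k$, hence $\dim_{\mathcal{H}}(E_1)=1$.

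The main difficulty is therefore sharpness of the constant in the mass distribution inequality in the first case; the rest is routine once the parameters $N,r$ and the geometry of gaps are fixed.
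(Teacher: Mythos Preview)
Your plan is sound but takes a different route from the paper in both cases, and the one step you flag as ``the main obstacle'' is precisely where the paper's argument diverges from yours.

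For $0<s<1$ the paper (following Falconer) also builds a binary self-similar Cantor set with $|E_{k,l}|=2^{-k/s}$, so that $|E_{k,2l-1}|^s+|E_{k,2l}|^s=|E_{k-1,l}|^s$. But for the lower bound it does \emph{not} go through the mass distribution principle. Instead it argues directly on $\delta$-covers: first reduce an arbitrary cover to a finite cover by construction intervals $E_{k,l}$ (contracting each covering interval and then using the concavity inequality $|J\cap E_{k,2l-1}|^s+|J\cap E_{k,2l}|^s\le |J|^s$), and then repeatedly replace a shortest pair of siblings by their parent, which never increases $\sum|E_{k,l}|^s$ and terminates at a full level with value $1$. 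This replacement argument is exactly what you would need to prove your sharp bound $\mu(U)\le |U|^s$; going through $\mu$ adds a layer of language without supplying the missing inequality. So your plan is not wrong, but the step you left open is the whole content of the lower bound, and the cleanest way to close it is the paper's sibling-merging trick rather than a counting estimate on how many level-$k$ intervals $U$ can meet.

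For $s=1$ your countable-union construction is correct and arguably simpler than what the paper does. The paper instead builds a \emph{single} Cantor set with level-dependent contraction ratios, taking $|E_{k,l}|=2^{-k-k^{2/3}}$ so that the natural cover gives $\mathcal H^1=0$ while for every $\sigma<1$ the same replacement argument yields $\mathcal H^\sigma=+\infty$. The advantage of the paper's choice is that $E_1$ retains the uniform binary tree structure $E_s=\bigcap_k\bigcup_l E_{k,l}$ with explicit interval lengths; this is exploited heavily in the subsequent construction of the holomorphic functions $F,G$ (the sums and products are indexed over the left endpoints $y_\tau$ of the $E_{k,l}$). Your union-of-rescaled-copies set would prove the lemma as stated but would force nontrivial changes in Section~3.
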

\begin{proof}
The set $E_s$ is obtained classically as the intersection of a decreasing family of compact sets 
\[
	E_s = \bigcap_{k=1}^\infty \bigcup_{l=1}^{2^k} E_{k,l}.
\]
The compact subintervals $E_{k,l}$ are defined inductively.

We fix a sequence of parameters by 
\[
	\frac{1}{\sigma_k}=\begin{cases}
		\frac{1}{s} , &\text{ if } 0< s< 1\\
		1+k^{\frac{2}{3}}- (k-1)^{\frac{2}{3}} , &\text{ if } s=1
	\end{cases}
\]
In both cases we have $\sigma_k\le \sigma_{k+1}$ and $\sigma_k<1$ all all $k$ . If $s=1$ we have $\frac{1}{\sigma_{k+1}}-\frac{1}{\sigma_k} = (k+1)^\frac{2}{3}+(k-1)^\frac{2}{3} - 2 k^\frac{2}{3} <0$ due to concavity of $t \mapsto t^{\frac{2}{3}}$ and so $\sigma_k \nearrow 1$ as $k \to \infty$.

We choose $E_{0,1}=[0,1]$ and proceed inductively. Suppose $E_{k-1, l}, l=1, \dotsc, 2^{k-1}$ defined, then $E_{k, 2l-1}, E_{k, 2l}$ are the closed disjoint subintervals obtained by removing an open interval in the middle of $E_{k-1,l}$ with
\begin{equation}\label{eq_E:2.101}
	\abs{E_{k,2l-1}}^{\sigma_{k}}=\abs{E_{k,2l}}^{\sigma_{k}}=\frac{1}{2} \abs{E_{k-1,l}}^{\sigma_{k}}.
\end{equation}
(Which is possible since $\sigma_k<1$.)
We obtained $2^k$ closed intervals $E_{k,l}$ of equal length
\begin{equation}\label{eq_E:2.102}
	\abs{E_{k,l}}= 2^{-\frac{1}{\sigma_k}} \abs{E_{k-1, l'}} = \begin{cases}
		2^{-\frac{k}{s}}, &\text{ if } 0< s < 1\\
		2^{-k-k^{\frac{2}{3}}}, &\text{ if } s=1
	\end{cases}
\end{equation}
where we used that $\sum_{l=1}^k \sigma_k^{-1}=\frac{k}{s}$ if $0<s<1$ and $\sum_{l=1}^k \sigma_k^{-1}= k + k^{\frac{2}{3}}$ if $s=1$.\\

In a first step we will check that $\h^s(E_s)\le 1$ $(\h^1(E_1)=0)$. To do so, let $\delta>0$ be given. Due to \eqref{eq_E:2.102} there is $k_0>0$ with $\abs{E_{k_0,l}}< \delta$. Hence $\{E_{k,l}\}_{l=1}^{2^k}$ is an admissible $\delta$-cover for $E_s$ for any $k\ge k_0$. With \eqref{eq_E:2.102} in mind we have
\begin{equation}\label{eq_E:2.103}
	\h^s_\delta(E_s) \le \sum_{l=1}^{2^k} \abs{E_{k,l}}^s = \begin{cases}
		2^k \left( 2^{-\frac{k}{s}} \right)^s = 1,  &\text{ if } 0<s<1\\
		2^k 2^{-k - k^{\frac{2}{3}}} \to 0, &\text{ if } s=1, k \to \infty.
	\end{cases}
\end{equation}\\

Now in the second step we check that $\h^s(E_s) \ge 1$ if $s<1$ and $\h^\sigma(E_1)=+\infty$ for all $\sigma<1$ if $s=1$. Equivalently we have to show that for any given $\epsilon>0$, $\sigma<1$ there is a $\delta>0$ with the property that for any $\delta-$cover $\mathcal{U}$ of $E_s$ 
\begin{align}\label{eq_E:2.104}
	\sum_{C \in \mathcal{U}} \diam(C)^{s}&\ge \h_\delta^s(E_s) > 1-\epsilon, &&\text{ if } 0<s<1\\ \nonumber
	\sum_{C \in \mathcal{U}} \diam(C)^{\sigma}&\ge \h_\delta^\sigma(E_1) > \frac{1}{\epsilon}. &&\text{ if } s=1 \text{ i.e. } \sigma<1
\end{align}
Let $\epsilon >0$, $\sigma<1$ be given. We fix $k_0>0$ large, determined later s.t. at least $\sigma_{k_0}>\sigma$ and $0< \delta< \abs{E_{k_0,l}}$.\\

Fix an admissible $\delta-$cover $\mathcal{U}$ by intervals $E_{k,l}$. Hence $k>k_0$ for any of these intervals. The compact intervals $E_{k,l}$ are relative open to the compact set $E_s$, so that the cover can assumed to be finite. Removing all intervals that are contained in some other of the collection we can even assume that they are mutually disjoint. 
Let $E_{k,2l-1}$ (or $E_{k,2l}$) be one of the shortest intervals in $\mathcal{U}$. Its companion $E_{k,2l}$ (respectively $E_{k,2l-1}$) has to be in $\mathcal{U}$ as well because all intervals are disjoined and they are one of shortest. The sums in \eqref{eq_E:2.104} do not increase if we replace these two intervals by its predecessor $E_{k-1,l}\supset E_{k,2l-1} \cup E_{k,2l}$ because
\begin{align*}
	\abs{E_{k,2l-1}}^s + \abs{E_{k,2l}}^s &= \abs{E_{k-1,l}}^s, &&\text{ if } 0<s<1\\
	\abs{E_{k,2l-1}}^\sigma + \abs{E_{k,2l}}^\sigma &= 2^{1-\frac{\sigma}{\sigma_k}} \abs{E_{k-1,l}}^\sigma \ge \abs{E_{k-1,l}}^\sigma, &&\text{ if } s=1 \text{ i.e. } \sigma<1
\end{align*}
where we used \eqref{eq_E:2.101} and $\sigma_k \ge \sigma_{k_0}> \sigma$.
We may proceed in this way, replacing the shortest intervals by larger ones without increasing the value of the sums, until we reach that all intervals are of same size i.e. $\mathcal{U} \to \{ E_{k_1, l}\}_{l=1}^{2^{k_1}}$ for some $k_1 > k_0$. We conclude
\begin{align*}
	\sum_{C \in \mathcal{U}} \diam(C)^{s}&\ge \sum_{l=1}^{2^{k_1}} \abs{E_{k_1,l}}^s = 1, &&\text{ if } 0<s<1\\ \nonumber
	\sum_{C \in \mathcal{U}} \diam(C)^{\sigma}&\ge \sum_{l=1}^{2^{k_1}} \abs{E_{k_1,l}}^\sigma = 2^{(1-\sigma)k_1 - \sigma k_1^{\frac{2}{3}}} > \frac{1}{\epsilon}. &&\text{ if } s=1 \text{ i.e. } \sigma<1
\end{align*}
where we used \eqref{eq_E:2.103} and $2^{(1-\sigma)k_1 - \sigma k_1^{\frac23}}\to \infty$ as $k_1 \to \infty$.\\

It remains to argue that the assumption that the $\delta-$cover is made out of intervals $E_{k,l}$ is no restriction. Fix  any $\delta$-cover $\mathcal{V}$. We can assume that it consists of open intervals without changing the value in \eqref{eq_E:2.104} significantly. Since $E_s$ is compact the cover can assumed to be finite.\\

Firstly let us argue for $E_1$. Any interval $I \in \mathcal{V}$ intersects at most three intervals $E_{k_I,l}$ with $\abs{E_{k_I,l}} \le \abs{I} < \abs{E_{k_I-1,l}}$. Otherwise $I$ would need to contain an interval of length at least $\abs{E_{k_I-1,l}}$ due to the Cantor type construction. This is impossible by the choice of $k_I$. Replacing $I$ by these at most three intervals $E_{k_I, \cdot}$ and the same for any other interval in $\mathcal{V}$ we obtain an open cover $\mathcal{U}$ by intervals $E_{k,l}$. Furthermore
\[
	\sum_{E_{k,l} \in \mathcal{U}} \abs{E_{k,l}}^\sigma \le 3 \sum_{I \in \mathcal{V}} \abs{I}^\sigma.
\]
We had just shown that the left hand side is larger then $\frac{1}{\epsilon}$, so \eqref{eq_E:2.104} holds for $s=1$.

Secondly, we argue for $E_s$, $0<s<1$ as follows: Recursively we will change $\mathcal{V}$ into a $\delta$-cover $\mathcal{U}$ by sets $E_{k,l}$ without increasing the sum in \eqref{eq_E:2.104}.  Since $\R \setminus E_s$ is open dense, we may assume that $\partial I \cap E_s = \emptyset$ for all $I \in \mathcal{V}$ without changing the sum in \eqref{eq_E:2.104} significantly. Replacing each $I$ by $J=\overline{I}\cap[0,1]$ we obtain a finite cover by closed sets, with the additional properties:  for each $I$ we have $\partial J \cap \left(E_s \setminus \{0,1\}\right) = \emptyset$, $J\subset [0,1]=E_{0,1}$ and for all $k$ sufficient large (depending on I) we have
\begin{equation}\label{eq.terminate} E_{k,l} \cap J = E_{k,l} \text{ or } E_{k,l} \cap J =\emptyset.	
\end{equation}
Given one of these intervals $J$ and let $J \subset E_{k-1,l}$ for some $k,l$. Then 
\begin{equation}\label{eq_E:2.105}
	\abs{J\cap E_{k,2l-1}}^s + \abs{J\cap E_{k,2l}}^s \le \abs{J \cap E_{k-1,l}}^s
\end{equation}
because $\abs{ E_{k,2l-1}}^s + \abs{ E_{k,2l}}^s = \abs{E_{k-1,l}}^s$ and the left-hand side of \eqref{eq_E:2.105} increases faster then the right-hand side. If either $J\cap E_{k,2l-1}\neq E_{k,2l-1}$ or $J\cap E_{k,2l}\neq E_{k,2l}$, we repeat the process, replacing $J\cap E_{k,2l-1}$ and $J\cap E_{k,2l}$ by smaller intervals. This process terminates after finitely many steps due to \eqref{eq.terminate}. Finally we obtained the desired cover $\mathcal{U}$. By construction we ensured $\sum_{J \in \mathcal{ V}_1} \abs{J}^s \ge \sum_{E_{k,l} \in \mathcal{U}} \abs{E_{k,l}}^s=1$. This proves \eqref{eq_E:2.104} if $0<s<1$.\\
\end{proof}


\section{construction of the holomorphic functions} 
\label{sec:construction_of_the_holomorphic_functions}
The Cantor set $E_s$ was obtained as 
\[
	E_s = \bigcap_{k=1}^\infty \bigcup_{l=1}^{2^k} E_{k,l}.
\]
Based on this construction, we define the index set:
\[
	\mathcal{I}=\left\{ (k,l)\colon k=1, \dotsc, \infty, l=1, \dotsc, 2^k \right\} \text{ with } \tau=(k,l) \in \mathcal{I}. 
\]
Recall that the enumeration had been chosen s.t. $E_{k,2l-1} \cup E_{k,2l} \subset E_{k-1,l} \forall (k,l)$.
The Cantor set $E_s$ constructed in lemma \ref{lem_E:2.1} has the property \eqref{eq_E:2.102} 
\begin{equation*}
	\abs{E_\tau}=\abs{E_{k,l}} = \begin{cases}
		2^{-\frac{k}{s}}, &\text{ if } 0< s < 1\\
		2^{-k-k^{\frac{2}{3}}}, &\text{ if } s=1
	\end{cases} \quad \forall \tau \in \I.
\end{equation*}
We denote with $y_\tau$ the left boundary point of the compact interval $E_\tau$. Note that the construction of $E_s$ ensures that $\forall z \in E_s$ there exists a sequence $\{y_{\tau(k)}\}_{k \in \N}$ such that $y_{\tau(k)} \to z$, i.e. $\{y_\tau\}_{\tau \in \I}$ is dense in $E_s$. 

Furthermore it is useful to fix some terminology. $\R_-=\{ z=x+i0 \colon x < 0 \}$ denotes the negative real axis. We will use $z+iy_\tau= r_\tau e^{i\theta_\tau}$ for any $\tau \in \I$. And for any $y \in \R$ let $\R_--iy$ be the by $-iy$ translated negative real axis i.e. the set $\{ x-iy \colon  x < 0 \}$. And we will use \[\R_- - iE_s= \bigcup_{y \in E_s} (\R_- -i y) =\{ x-iy \colon x \in \R_-, y \in E_s \}.\]

The proof to lemma \ref{lem_E:1.1} is split into two parts. In the next paragraph we construct holomorphic functions $F,G$ based on the Cantor set $E_s$ and then in the subsequent paragraph the $C^\infty$ extension is proven. 

\subsection{Holomorphy} 
\label{sub:holomorphy}
On the slit plane $\C\setminus \R_-$ the principal value of the logarithmic function $\ln: \C\setminus \R_- \to \C \cap \{ -\pi < \Im(z)< \pi \} $ is single valued and holomorphic. So will be all roots for $\alpha \in \R$ defined as $z^{\alpha}=e^{\alpha \ln(z)}$.\\
As composition of holomorphic functions on $\C\setminus \R_-$ the building blocks, $a(z)=e^{-z^{-\alpha}}, b(z)=\cos(\ln(z))e^{-z^{-\alpha}}$ are clearly holomorphic on $\C\setminus\R_-$\\
$(z+iy_\tau)^{-\alpha}=r_{\tau}e^{-i\alpha \theta_\tau}$ is single valued and holomorphic on $\C \setminus (\R_- - i y_\tau) \subset \C\setminus(\R_- - i E_s )$ for every $\tau \in \I$, $\alpha_k \in \R$.\\

\begin{lemma}\label{lem_E:3.1}
	Given a sequence of complex numbers $a_k \in \C$ with $\sum_{k=0}^\infty 2^k \abs{a_k}< \infty$ and a sequence of real numbers $0<\alpha_k\le 1$ then
	\[
		F(z)= \sum_{\tau \in \I} a_k (z+ i y_\tau)^{-\alpha_k}
	\]
	is holomorphic on $\C\setminus \{\R_--iE_s\}$ and so is $e^{-F(z)}$.
\end{lemma}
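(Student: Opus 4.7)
My plan is to apply the Weierstrass theorem on uniform limits of holomorphic functions. The summands $a_k(z+iy_\tau)^{-\alpha_k}$ are already known, by the discussion preceding the statement, to be holomorphic on the common open set $\Omega := \C \setminus (\R_- - iE_s)$ (strictly, one must also exclude the branch points $-iy_\tau$, i.e.\ work on $\C \setminus \overline{\R_- - iE_s}$, since each $y_\tau$ lies in $E_s$); hence it suffices to establish local uniform convergence there. Once $F$ is holomorphic, $e^{-F}$ is holomorphic on the same set as a composition of holomorphic maps.

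The central estimate is a uniform bound on the individual terms. I would fix a compact $K \subset \Omega$ and set $d := \dist(K, \overline{\R_- - iE_s}) > 0$. Since $-iy_\tau \in \overline{\R_- - iE_s}$ for every $\tau \in \I$, one has $|z+iy_\tau| \ge d$ uniformly in $z \in K$ and $\tau$. Using the principal branch, $|(z+iy_\tau)^{-\alpha_k}| = |z+iy_\tau|^{-\alpha_k}$, and since $0 < \alpha_k \le 1$ this is at most $C_K := \max\{1, d^{-1}\}$, uniformly in $\tau$ and $k$. Grouping $\tau = (k,l)$ by the level $k$ (which contributes cardinality $2^k$) and invoking the hypothesis yields
\[
\sum_{\tau \in \I} \sup_{z \in K} \bigl| a_k (z+iy_\tau)^{-\alpha_k} \bigr| \le C_K \sum_{k} 2^k |a_k| < \infty.
\]
The Weierstrass $M$-test then delivers absolute and uniform convergence on $K$, and Weierstrass's theorem on uniform limits of holomorphic functions gives the holomorphy of $F$ on $\Omega$.

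I do not foresee a real obstacle: the whole argument is just a careful match between the geometric weight $2^k$ in the summability hypothesis on $a_k$ and the exponential cardinality $2^k$ of the $k$-th Cantor generation, combined with a routine lower bound on $|z+iy_\tau|$ on compact subsets of the domain. The only interpretive subtlety is to read the singular set of $F$ as the closure of $\R_- - iE_s$, so that the branch points $-iy_\tau$ are properly excluded.
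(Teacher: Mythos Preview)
Your argument is correct and follows essentially the same route as the paper: bound $|(z+iy_\tau)^{-\alpha_k}|=r_\tau^{-\alpha_k}$ uniformly away from the singular set, group the index set $\I$ by level $k$ to match the factor $2^k$ in the summability hypothesis, and conclude via local uniform convergence of holomorphic functions. The only cosmetic differences are that the paper restricts to $0<d<1$ (so that $r_\tau^{-\alpha_k}\le d^{-1}$ directly) and works on the open sets $\{\dist(z,-iE_s)>d\}$ rather than on compacta; your observation about needing the closure $\overline{\R_--iE_s}$ to capture the branch points $-iy_\tau$ is a valid clarification the paper leaves implicit.
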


\begin{proof}
	For a fixed $0<d<1$ we have for any $z \in \{ z\in C \colon \dist(z, -iE_s)> d\}$ satisfies $\abs{(z+iy_\tau)^{-\alpha_k}}= r_\tau^{-\alpha_k} \le d^{-1}$. So that the sum $\sum_{\tau\in \I} \abs{a_k(z+iy_\tau)^{-\alpha_k}} \le d^{-1} \sum_{k=1}^\infty 2^k \abs{a_k}<\infty$
	converges absolutely. $F$ is therefore the uniform limit of holomorphic functions on $\{ z\in C \colon \dist(z, -iE_s)> d\}$ and so itself holomorphic. $d$ has been arbitrary and therefore $F $ is holomorphic on $\C\setminus (\R_--iE_s)$. $e^{-F(z)}$ is the composition of two holomorphic functions and so itself holomorphic on the same set. 
\end{proof}

\begin{lemma}\label{lem_E:3.2}
	Given a sequence of non-negative real numbers $b_k \in \R_+$ that satisfies $\sum_{k=0}^\infty 2^k b_k <\infty$, then for any subset $\mathcal{J}\subset \I$ 
	\begin{equation}\label{eq:3.101}
		G_\mathcal{J}(z)= \prod_{\tau \in \mathcal{J}} \cos(b_k\ln(z+iy_\tau))
	\end{equation}
	is holomorphic on $\C\setminus (\R_- - i E_s )$ and uniformly bounded by
	\[
		\abs{G_\mathcal{J}(z)} \le e^{\sum_{\tau \in \mathcal{J}} b_k \abs{\theta_\tau}} \le e^{\pi \sum_{k =0}^\infty 2^k b_k}.
	\]
\end{lemma}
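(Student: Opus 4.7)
The plan is to split the argument into (i) establishing holomorphy of $G_\mathcal{J}$ via uniform convergence of the infinite product on compact subsets of $\C\setminus(\R_--iE_s)$, and (ii) the pointwise bound.

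For (i), I would first observe that on any compact $K\subset\C\setminus(\R_--iE_s)$, the quantity $|\ln(z+iy_\tau)|$ is bounded by some $C_K$, uniformly in $\tau\in\I$ and $z\in K$: indeed $r_\tau=|z+iy_\tau|$ is bounded below by $\dist(K,-iE_s)$ and above since $K$ is bounded and $y_\tau\in[0,1]$, while $|\theta_\tau|\le\pi$. Writing each factor as $1+u_\tau(z)$ with $u_\tau(z)=\cos(b_k\ln(z+iy_\tau))-1$ and using the elementary Taylor estimate $|\cos w-1|\le\tfrac12|w|^2 e^{|w|}$, one gets
\[
|u_\tau(z)|\le \tfrac12\, b_k^2\, C_K^2\, e^{b_k C_K}.
\]
Summing over $\tau=(k,l)$, $l=1,\dots,2^k$, the hypothesis $\sum 2^k b_k<\infty$ forces $b_k\to 0$ and $2^k b_k$ to be bounded, so $2^k b_k^2$ is dominated by a constant times $b_k$, whence $\sum_{\tau\in\mathcal{J}}|u_\tau(z)|<\infty$ uniformly on $K$. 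The standard criterion for convergence of $\prod(1+u_\tau)$ when $\sum|u_\tau|$ converges then yields uniform convergence on compacts, and since each finite partial product is holomorphic on $\C\setminus(\R_--iE_s)$ (as a composition of functions holomorphic on the appropriate slit plane), so is the limit $G_\mathcal{J}$.

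For (ii), I would apply the classical bound $|\cos(u+iv)|\le\cosh(v)\le e^{|v|}$ (following from $|\cos(u+iv)|^2=\cosh^2v-\sin^2u$). With $b_k\ln(z+iy_\tau)=b_k\ln r_\tau+i b_k\theta_\tau$ the imaginary part has modulus $b_k|\theta_\tau|$, so
\[
|G_\mathcal{J}(z)|\le\prod_{\tau\in\mathcal{J}}e^{b_k|\theta_\tau|}=\exp\Bigl(\sum_{\tau\in\mathcal{J}}b_k|\theta_\tau|\Bigr)\le\exp\Bigl(\pi\sum_{k=0}^\infty 2^k b_k\Bigr),
\]
using $|\theta_\tau|<\pi$ together with the count of $2^k$ indices at each level.

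The main technical obstacle is the uniform convergence in (i): the number of factors doubles at each level $k$, so $b_k$ must decay correspondingly. The summability assumption $\sum_k 2^k b_k<\infty$ is precisely what is needed to absorb the doubling (it forces $b_k=o(2^{-k})$, hence $2^k b_k^2=o(b_k)$). The extra exponential $e^{b_k C_K}$ entering the Taylor estimate is not a concern because $b_k\to 0$ already guarantees this factor is uniformly bounded.
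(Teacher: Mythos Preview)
Your argument is correct and reaches the same conclusion as the paper, but the route to holomorphy in (i) is organised differently. The paper splits the product into a finite head $k<k_0$ and an infinite tail $k\ge k_0$, choosing $k_0$ so that $|b_k\ln r_\tau|<\frac{\pi}{4}$ on an annular region $\{d<\dist(z,-iE_s)<d^{-1}\}$; this keeps each tail factor in the right half-plane, so one can take a holomorphic branch of $\ln(\cos(b_k\ln(z+iy_\tau)))$, bound its modulus by a constant times $b_k$, and realise the tail as $\exp$ of an absolutely convergent sum. Your approach instead invokes the standard $\prod(1+u_\tau)$ criterion directly, with the quadratic Taylor bound $|\cos w-1|\le\tfrac12|w|^2e^{|w|}$ and the observation $2^k b_k^2\le(\sup_j 2^j b_j)\,b_k$ to get $\sum_\tau|u_\tau|<\infty$ on compacta. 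This is cleaner: it avoids the head/tail split and the bookkeeping of where the complex logarithm is single-valued, at the cost of appealing to a black-box convergence criterion rather than exhibiting the mechanism. For part (ii) the two arguments are essentially identical, both resting on $|\cos(u+iv)|\le\cosh v\le e^{|v|}$. One small point: your lower bound $r_\tau\ge\dist(K,-iE_s)$ tacitly assumes $K$ is disjoint from $-iE_s$; the paper makes the same tacit assumption by working on $\{\dist(z,-iE_s)>d\}$, so this is not a discrepancy between the two proofs.
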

\begin{proof}
As a composition of holomorphic functions $\cos(b_k \ln(z+iy_\tau))$ is holomorphic on $\C \setminus (\R_--iE_s)$ for every $\tau \in \I$. Using the expansion
\begin{equation}\label{eq:cos_expansion}
	\cos(x+iy)=\cos(x)\cosh(y)-i\sin(x)\sinh(y)
\end{equation}
we have 
\begin{equation}\label{eq:cos_expansion_evalutated}
	\cos(b_k\ln(z+iy_\tau))=\cos(b_k \ln(r_\tau))\cosh(b_k\theta_\tau)+ i \sin(-b_k \ln(r_\tau))\sinh(b_k \theta_\tau)
\end{equation}
For every $\tau \in \I$ we have therefore
\begin{align}
	&\abs{\cos(b_k\ln(r_\tau))\cosh(b_k\theta_\tau)}\le \abs{\cos(b_k\ln(z+iy_\tau))} \le \cosh(b_k\theta_\tau)\label{eq:3.102}\\
	&\frac{\Im(\cos(b_k\ln(z+iy_\tau)))}{\Re(\cos(b_k\ln(z+iy_\tau)))}= \tan(-b_k \ln(r_\tau))\tanh(b_k\theta_\tau). \label{eq:3.103}
\end{align}
To show that \eqref{eq:3.101} is well defined and holomorphic, fix $0<d<\frac12$ and $k_0 \in \N$ sufficient large s.t. $0\le-2 b_k\ln(d)\le \frac{\pi}{4}$ for all $k \ge k_0$. This ensures that for any $z \in \{ d < \dist(z, -iE_s) < \frac{1}{d}\}$ and $\tau \in \I \cap \{k\ge k_0\}$ we have $d\le r_\tau \le \frac{1}{d} + \operatorname{diam}(E_s) \le \frac{1}{d^2}$. Hence $-\frac{\pi}{4}< b_k \ln(r_\tau)< \frac{\pi}{4}$ and so $\Re\left(\cos(b_k\ln(z+iy_\tau))\right) >0$. This implies that $\ln(\cos(b_k\ln(z+iy_\tau)))$ is a holomorphic function on $\{ d < \dist(z, -iE_s) < \frac{1}{d}\}$ if $\tau \in I \cap \{k\ge k_0\}$. Using \eqref{eq:3.102} we obtain 
\begin{equation}\label{eq:3.104}
	\ln(\cosh(b_k \theta_\tau))+ \ln(\cos(b_k \ln(r_\tau))) \le \ln(\abs{\cos(b_k\ln(z+iy_\tau))}) \le \ln(\cosh(b_k \theta_\tau)).
\end{equation}
This is the real part of $\ln(\cos(b_k\ln(z+iy_\tau)))$.
Its imaginary part, the argument of $ \cos(b_k\ln(z+iy_\tau))$ can be estimated by $\abs{b_k\ln(r_\tau)}$.  This follows from \eqref{eq:3.103} taking into account that $\abs{\tanh}<1$ and that $\tan(x)$ is convex on $[0, \frac{\pi}{4}]$ hence $\tan(s)\le s$ 
Combining both we deduce
\[
	\abs{\ln(\abs{\cos(b_k\ln(z+iy_\tau))})} \le \ln(\cosh(b_k \theta_\tau)) - \ln(\cos(b_k \ln(r_\tau))) + \abs{b_k \ln(r_\tau)}.
\]
Furthermore we can use $\cosh(x)\le e^x$ and that $-\ln(\cos(x))\le C \abs{x}$ for some $C>0$ for $\abs{x}\le \frac{\pi}{4}$\footnote{This can be seen auf follows: $\psi(x)=\ln(\frac{1}{\cos(x)})$ as a composition of non-decreasing convex functions is convex on $]0, \frac{\pi}{2}[$. So $h(x)=\frac{\psi(x)}{x}$ is monoton increasing. Therefore we have $h(x) \le h(\frac{\pi}{4}) x$.} to estimate further: 
\[
	\abs{\ln(\abs{\cos(b_k\ln(z+iy_\tau))})} \le b_k \abs{\theta_\tau}+ (C+1) \abs{b_k \ln(r_\tau)} \le (\pi -2\ln(d) (C+1) ) b_k;
\]
$\sum_{\tau \in \I\cap \{k\ge k_0\}} \abs{\ln(\cos(b_k \ln(z+iy_\tau)))}< (\pi -2\ln(d) (C+1) ) \sum_{k=k_0}^\infty 2^k b_k$ converges uniformly on $\{ d < \dist(z, -iE_s) < \frac{1}{d}\}$ so that 
\[
	G_1(z)=e^{\sum_{\tau\in \mathcal{J},k \ge k_0} \ln(\cos(b_k \ln(z+iy_\tau)))}
\]
is holomorphic on $\{ d < \dist(z, -iE_s) < \frac{1}{d}\}$. In \eqref{eq:3.104} we observed that $\Re(\ln(\cos(b_k \ln(z+iy_\tau))))\le  \ln(\cosh(b_k \theta_\tau)) \le b_k \abs{\theta_\tau}$ and therefore
\[
	\abs{G_1(z)}=e^{\sum_{\tau\in \mathcal{J}, k \ge k_0} \Re(\ln(\cos(b_k \ln(z+iy_\tau))))} \le e^{\sum_{\tau\in \mathcal{J}, k \ge k_0} b_k \abs{\theta_\tau}}.
\]
\[
	G_2(z)=\prod_{\substack{\tau\in \mathcal{J}\\k<k_0}} \cos(b_k \ln(z+iy_\tau))
\]
is the product of finitely many holomorphic functions on $\C\setminus (\R_--iE_s)$ and so itself holomorphic with
\[
	\abs{G_2(z)} \le \prod_{\substack{\tau\in \mathcal{J}\\k<k_0}} \abs{\cos(b_k \ln(z+iy_\tau))} \le \prod_{\substack{\tau\in \mathcal{J}\\k<k_0}} \cosh(b_k \theta_\tau) \le e^{\sum_{\tau\in \mathcal{J},k<k_0} b_k \abs{\theta_\tau}}
\]
where we used \eqref{eq:3.102}. Multiplication of $G_1$ and $G_2$ closes the argument.
\end{proof}

We note that
$\cos(b_k \ln(z+iy_\tau))=0$ for $z= -iy_\tau + e^{-\frac{m\pi-\frac{\pi}{2}}{b_k}}$ for any $\tau=(k,l) \in \I$ and $m \in \N$, so that 
\begin{equation}\label{eq:zeros} G(z)=G_{\I}(z)=0 \text{ for all } z= -iy_\tau + e^{-\frac{m\pi-\frac{\pi}{2}}{b_k}}, \tau=(k,l) \in \I, m \in \N.\end{equation}
Consequently we got the following:

\begin{corollary}\label{cor_E:3.3}
	Let $\alpha_k,a_k,b_k$ be sequences of non-negative real numbers, that satisfies $0\le \alpha_k \le 1$ and $\sum_{k=1}^\infty 2^k a_k, \sum_{k=1}^\infty 2^k b_k < \infty$ then
	\[
		f(z)=e^{-F(z)}, \quad g(z)=G(z)e^{-F(z)}
	\] 
	are holomorphic on $\C\setminus (\R_-- iE_s)$. For the dense subset $\{y_\tau\}_{\tau \in \I}$ of $E_s$ we have  
	\[
		g(z)=0 \text{ for } z= -iy_\tau + e^{-\frac{m\pi-\frac{\pi}{2}}{b_k}}, \tau=(k,l) \in \I, m \in \N.
	\]
\end{corollary}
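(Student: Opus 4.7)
The plan is to read off both claims directly from Lemmas \ref{lem_E:3.1} and \ref{lem_E:3.2}, with only an observation about a single vanishing factor needed for the zero set.

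For the holomorphy statement, I would simply assemble the previous lemmas. The hypothesis $\sum 2^k\abs{a_k}<\infty$ together with $0\le\alpha_k\le 1$ puts us in the setting of Lemma \ref{lem_E:3.1}, so $F$ (and hence $f=e^{-F}$) is holomorphic on $\C\setminus(\R_- - iE_s)$. The hypothesis $\sum 2^k b_k<\infty$ with $\mathcal{J}=\I$ puts us in the setting of Lemma \ref{lem_E:3.2}, so $G=G_{\I}$ is holomorphic on the same set. Then $g=G\cdot e^{-F}$ is a product of two holomorphic functions on $\C\setminus(\R_- - iE_s)$, hence holomorphic there.

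For the zero set, fix $\tau=(k,l)\in\I$ and $m\in\N$, and set $z_0=-iy_\tau+e^{-(m\pi-\pi/2)/b_k}$. Since $\Re(z_0)=e^{-(m\pi-\pi/2)/b_k}>0$ and every point of $\R_--iE_s$ has non-positive real part, we see $z_0\in\C\setminus(\R_--iE_s)$, so $g$ is defined and holomorphic at $z_0$. By construction $z_0+iy_\tau$ is a positive real, so $\ln(z_0+iy_\tau)=-(m\pi-\pi/2)/b_k$ and consequently $\cos(b_k\ln(z_0+iy_\tau))=\cos(-(m-\tfrac12)\pi)=0$. It remains to argue that this single vanishing factor forces the full product $G(z_0)$ to vanish. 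For this I would apply Lemma \ref{lem_E:3.2} to the index set $\mathcal{J}=\I\setminus\{\tau\}$: it yields that the partial product $G_{\I\setminus\{\tau\}}$ is holomorphic and bounded near $z_0$, so
\[
    G(z_0)=\cos(b_k\ln(z_0+iy_\tau))\cdot G_{\I\setminus\{\tau\}}(z_0)=0.
\]
Multiplying by $e^{-F(z_0)}\neq 0$ then gives $g(z_0)=0$, as claimed.

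There is no real obstacle: the previous two lemmas do all the work, and the only small check is the one above, that removing one index from $\I$ still leaves us in the situation covered by Lemma \ref{lem_E:3.2} (which is immediate, as the tail series $\sum 2^k b_k$ is unchanged). The brief verifications worth spelling out in the write-up are just (a) that $z_0$ lies in the domain of holomorphy, and (b) that $\cos(b_k\ln(z_0+iy_\tau))$ really evaluates to zero for the specific exponent chosen.
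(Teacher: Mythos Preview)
Your proposal is correct and follows essentially the same route as the paper: the holomorphy of $f$ and $g$ is read off directly from Lemmas~\ref{lem_E:3.1} and~\ref{lem_E:3.2}, and the zero set comes from the vanishing of the single factor $\cos(b_k\ln(z+iy_\tau))$ at the specified points. Your extra step of invoking Lemma~\ref{lem_E:3.2} with $\mathcal{J}=\I\setminus\{\tau\}$ to ensure the remaining infinite product is a genuine finite number is a clean justification of a point the paper leaves implicit.
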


\subsection{$C^\infty$-extension} 
\label{sub:extension}
In this section we will show that one can choose sequences $a_k,b_k,\alpha_k$ appropriately (satisfying the conditions of corollary \ref{cor_E:3.3}) such that $f,g$ are holomorphic on $\C_+$ and admit a $C^\infty$-extension to $\overline{\C_+}=\overline{\{z \in \C \colon \Re(z)>0\}}$).\\

Firstly we check that the building blocks, $a,b$, introduced in \eqref{eq:0.101}, admit such a $C^\infty$-extension to $\overline{\C_+}$ and are vanishing to infinite order in $0$ i.e.
\begin{equation}\label{eq:3.201}
	\lim_{\substack{ \abs{z} \searrow 0 \\ z \in \overline{\C_+}}} \Abs{ \frac{d^m}{dz^m} a(z)}, \Abs{ \frac{d^m}{dz^m} b(z)} = 0.
\end{equation}
By induction one shows that there are constants $C=C(m),D=D(m) >0$ and $\mu=\mu(m), \nu=\nu(m) \in \R$  (depending only on $m$) s.t. for any $0 < \alpha <1$, $z=re^{i\theta} \in \C\setminus \R_-$, $r<1$
\[
	\Abs{\frac{d^m}{dz^m}e^{-z^{-\alpha}}} \le C\Abs{z^{-2m}} \abs{e^{- z^{-\alpha}}} = C r^{-2m} e^{-\Re(z^{-\alpha})}
\]
and using \eqref{eq:cos_expansion_evalutated} and the equivalence for $\sin(z), z\in \C$
\[
	\Abs{ \frac{d^m}{dz^m} \cos(\ln(z))}= \Abs{ \mu\, \frac{\cos(\ln(z))}{z^m}+\nu \,\frac{\sin(\ln(z))}{z^m}} \le D\, r^{-m} \cosh(\theta).
\]
Hence \eqref{eq:3.201} holds if $r^{-m} e^{-\Re(z^{-\alpha})} \to 0$ as $r\to 0$ for every $m \in \N$. This is equivalent to $\Re(z^{-\alpha})+ m \ln(r) \to +\infty$ as $r \to 0$. For $z \in \overline{\C_+}\setminus\{0\}$ we have $-\frac{\pi}{2}\le \theta \le \frac{\pi}{2}$ and so for $r\to 0$ we have 
\[
	\Re(z^{-\alpha}) + m \ln(r)= r^{-\alpha}\cos(\alpha \theta) + m \ln(r)\ge r^{-\alpha} \cos(\alpha\frac{\pi}{2}) + m \ln(r) \to \infty.
\]

Similarly we can conclude the extension for $f,g$:
\begin{lemma}\label{lem_E:3.4}
	Let the sequences be $a_k=b_k= \frac{2^{-k}}{k^2}$ and 
	\[
		\alpha_k=\begin{cases}
			\alpha, &\text{ if } 0 < s <1 \text{ for some } s<\alpha <1.\\
			1- \frac{1}{2} k^{-\frac{1}{3}} &\text{ if } s=1
		\end{cases}
	\]
	Then the function $f,g$ of corollary \ref{cor_E:3.3} are holomorphic on $\C\setminus (\R_--iE_s )$ and admit $C^\infty$ extensions to $\overline{\C_+}$ with
	\begin{equation*}
		\lim_{\substack{ \dist(z,-iE_s) \to 0 \\ z \in \overline{\C_+}}} \Abs{ \frac{d^m}{dz^m} f(z)}, \Abs{ \frac{d^m}{dz^m} g(z)} = 0.
	\end{equation*}
\end{lemma}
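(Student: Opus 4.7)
With the choices $a_k = b_k = 2^{-k}/k^2$, the series $\sum_k 2^k a_k = \sum_k 2^k b_k = \sum_k k^{-2}$ converge, and in both regimes $\alpha_k \in (0,1)$; thus Corollary \ref{cor_E:3.3} immediately yields that $f$ and $g$ are holomorphic on $\C \setminus (\R_- - iE_s)$. The substantive content is the $C^\infty$-extension to $\overline{\C_+}$. My strategy is to prove that for $z \in \overline{\C_+}$ with $d := \dist(z, -iE_s)$ small, $|f(z)|$ together with all its derivatives decay faster than any power of $d$, and likewise for $g$; setting $f$, $g$ and all of their derivatives equal to $0$ on $-iE_s$ then produces the required smooth extension.

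The decay will be reduced to two complementary estimates: (a) a super-polynomial lower bound $\Re F(z) \ge A(d)$ with $A(d)/(\log(1/d))^N \to +\infty$ as $d \to 0$ for every $N$, and (b) at-worst polynomial upper bounds $|F^{(j)}(z)| \le C_j d^{-N_j}$ on the derivatives. Estimate (a) alone gives $|f(z)| = e^{-\Re F(z)} \le C_N d^N$ for every $N$; combining (a) and (b) through Fa\`a di Bruno applied to $f = e^{-F}$, i.e.\ $|f^{(m)}(z)| \le P_m(|F'|, \dotsc, |F^{(m)}|)\, e^{-\Re F(z)}$, transfers the super-polynomial decay to every derivative. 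An entirely parallel scheme handles $g = Ge^{-F}$: I would use the uniform bound on $|G|$ from Lemma \ref{lem_E:3.2} together with analogous polynomial-in-$1/d$ bounds on derivatives of $G$, derived from the representation $G(z) = \exp\bigl(\sum_\tau \log\cos(b_k \ln(z+iy_\tau))\bigr)$ that is implicit in the proof of Lemma \ref{lem_E:3.2}.

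For (a), fix $y_0 \in E_s$ with $|z + iy_0| = d$. At each level $k$ there is a unique $l_k$ with $y_0 \in E_{k, l_k}$, so the triangle inequality gives $r_{(k, l_k)} \le d + |E_{k, l_k}|$; moreover $z \in \overline{\C_+}$ forces $|\theta_\tau| \le \pi/2$, hence $\Re((z+iy_\tau)^{-\alpha_k}) \ge r_\tau^{-\alpha_k} \cos(\alpha_k \pi/2)$. Retaining only the single term $\tau = (k_d, l_{k_d})$ at the balancing scale where $|E_{k_d, l}| \sim d$, direct computation gives: for $s < 1$, $k_d \sim s \log_2(1/d)$ and $\Re F(z) \gtrsim d^{-(\alpha - s)}/(\log(1/d))^2$ (using $\alpha > s$); for $s = 1$, $k_d$ satisfies $k_d + k_d^{2/3} \sim \log_2(1/d)$ and the tuning $\alpha_k = 1 - \frac{1}{2} k^{-1/3}$ yields $\Re F(z) \gtrsim 2^{k_d^{2/3}/2}/k_d^{7/3}$. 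Both quantities grow faster than any polynomial in $\log(1/d)$. Estimate (b) comes from $|F^{(j)}(z)| \le j! \sum_\tau a_k r_\tau^{-\alpha_k - j}$, split at each level $k$ into the diagonal term (worst case a fixed power of $1/d$) and the off-diagonal ones, for which $r_{(k, l)} \gtrsim |E_{k, l}|$ by the Cantor construction; the total is polynomial in $1/d$.

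The principal obstacle is the $s = 1$ regime, where the precise exponent $\alpha_k = 1 - \frac{1}{2} k^{-1/3}$ is engineered so that $r_\tau^{-\alpha_k}$ overcomes the weight $a_k \sim 2^{-k}$ at the Cantor scale while $\cos(\alpha_k \pi/2) \approx \frac{\pi}{4} k^{-1/3}$ stays strictly positive: slowing the approach of $\alpha_k$ to $1$ sharpens the cosine but loses the geometric Cantor gain, while accelerating it achieves the opposite trade-off. Making (a) and (b) coexist --- ensuring the super-polynomial decay of $e^{-\Re F}$ swamps the polynomial blow-up of $|F^{(j)}|$ at every order $m$ --- hinges on this delicate tuning and constitutes the main technical work of the argument.
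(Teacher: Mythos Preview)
Your overall architecture matches the paper's: both reduce the $C^\infty$-extension to showing $\Re F(z) + m\ln d \to +\infty$ as $d = \dist(z,-iE_s) \to 0$, and both verify this by picking the Cantor index at the balancing scale $|E_{k_0,\cdot}| \sim d$ and exploiting the tuning of $\alpha_k$ in the $s=1$ case. Two tactical points are worth noting. First, your estimate (b) is over-engineered: since every left endpoint $y_\tau$ lies in $E_s$, one has $r_\tau \ge d$ for \emph{all} $\tau$, and the crude bound $|F^{(j)}(z)| \le j!\, d^{-j-1}\sum_k 2^k a_k$ follows immediately---no diagonal/off-diagonal split is needed. Second, your proposed route to bounding $|G^{(m)}|$ via the exponential representation $G = \exp\bigl(\sum_\tau \log\cos(\cdots)\bigr)$ is problematic precisely where it matters: $G$ has zeros in $\C_+$ accumulating on $-iE_s$, and the logarithmic representation breaks down in every neighbourhood of such a zero, so it cannot yield a clean uniform bound in $d$. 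The paper sidesteps this entirely by applying Cauchy's integral formula on $\partial B_d(z)\subset \C\setminus(\R_--iE_s)$ together with the uniform bound $|G|\le C$ from Lemma~\ref{lem_E:3.2}, giving $|G^{(m)}(z)|\le Cm!\,d^{-m}$ in one line; you should adopt this fix.
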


\begin{proof}
That $f,g$ are well-defined and holomorphic is the content of corollary \ref{cor_E:3.3}. It remains to check the $\C^\infty$-extension.\\
Due to the general Leibnitz rule $\frac{d^m}{dz^m} f(z) = \sum_{n=0}^m \binom{m}{n} G^{(m-n)}(z) (e^{-F(z)})^{(n)}$ it is sufficient to check that for any $m, n \in \N$,
\[ \lim_{\substack{ \dist(z,-iE_s) \to 0 \\ z \in \overline{\C_+}}} \abs{G^{(m)}(z) (e^{-F(z)})^{(n)}} =0. \]
Firstly we note that $F$ is holomorphic on $\C_+$,  $(e^{-F(z)})'=-F'(z) e^{-F(z)}$ and
\[ \abs{F^{(m)}(z)} \le \sum_{\tau \in \I} a_k \left\lvert\frac{d^m}{dz^m} (z+iy_\tau)^{-\alpha_k}\right\rvert \le m! d^{-m-1} \sum_{k=1}^\infty a_k 2^k= \frac{\pi^2}{6}  m! d^{-m-1}\]
for $z \in \C_+, \dist(z,-iE_s)\ge d$, so that by induction and the Leibniz rule we deduce 
\begin{equation}\label{eq:3.202}
	\Abs{\frac{d^m}{dz^m} e^{-F(z)}} \le C d^{-m-1} \abs{e^{-F(z)}} \text{ for } z \in \{\dist(z,-iE_s)\ge d \}.
\end{equation}
for a constant $C>0$ that depends only on $m$. 
Secondly, Cauchy's integral formula \[G^{(m)}(z)= \frac{m!}{2\pi i} \oint_{\partial B_d(z)} \frac{ G(w)}{(w-z)^{m+1}} dw\] applies since $G$ is holomorphic on $B_d(z)$.  Combining it with the uniform bound on $\abs{G}$ (lemma \ref{lem_E:3.2}) gives
\begin{equation}\label{eq:3.203}
	\abs{G^{(m)}(z)} \le  \frac{m!}{d^m} \sup_{w \in B_d(z)} \abs{G(w)} \le \frac{C m!}{d^m}.
\end{equation}
Considering \eqref{eq:3.202}, \eqref{eq:3.203} and the general Leibniz rule the $C^\infty$ lemma follows if for every $m \in \N$ 
\[
	d^{-m}\abs{e^{-F(z)}} = e^{- (\Re{(F(z))}+ m \ln(d))} \to 0 \text{ for } d=\dist(z, -{iE_s}) \to 0.
\]
This is equivalent to 
\begin{equation}\label{eq:3.204}
	\Re{(F(z))}+ m \ln(d) \to +\infty \text{ as } d \to 0.
\end{equation}\\
To check it, let $z \in \overline{\C_+}$ with $d= \dist(z, -iE_s)>0$ be given. Fix $y \in E_s$ with $d=\abs{z-iy}$ and $\tau_k=(k,l) \in \I$ with $y \in E_{\tau_k}$ for each $k \in \N$. Take $k_0 \in \N$ with
\begin{equation}\label{eq:3.205}
\abs{E_{k_0+1, \cdot}}< d \le \abs{E_{k_0, \cdot}}
\end{equation}
Hence for $k \le k_0$ we have $r_{\tau_k} \le d + \abs{E_{\tau_k}} \le 2 \abs{E_{\tau_k}}$ and so
\begin{align*}
	\Re(F(z)) &= \sum_{\tau \in \I} a_k \cos(\alpha_k \theta_\tau) r_\tau^{-\alpha_k} \ge \sum_{k=1}^{k_0} a_k \cos(\alpha_k \frac{\pi}{2})  r_{\tau_k}^{-\alpha_k}\\
	&\ge \frac{1}{2} \sum_{k=1}^{k_0} a_k \cos(\alpha_k \frac{\pi}{2}) \abs{E_{\tau_k}}^{-\alpha_k}.
\end{align*}
We will consider $0<s<1$ and $s=1$ separately.\\ 
If $0<s<1$ we have $a_k \cos(\alpha_k \frac{\pi}{2}) \abs{E_{\tau_k}}^{-\alpha_k} = k^{-2} \cos(\alpha\frac{\pi}{2}) \zeta^{k}$ where $\zeta=2^{\frac{\alpha}{s}-1}>1$.  We combine this with
\[
	(\zeta -1) \sum_{k=1}^{k_0} k^{-2} \zeta^k = k_0^{-2} \zeta^{k_0+1} - \zeta + \sum_{k=1}^{k_0-1} (k^{-2}-(k+1)^{-2})\zeta^{k+1} \ge k_0^{-2} \zeta^{k_0+1} - \zeta
\]
to conclude that 
\begin{align*}
	&\Re(F(z))+ m \ln(d) \ge c k_0^{-2} \zeta^{k_0+1} + m \ln(d) - c \zeta\\
	&\ge c k_0^{-2} \zeta^{k_0+1} - \frac{m \ln(2)}{s}(k_0+1) - c \zeta \to +\infty \quad (k_0 \to \infty)
\end{align*}
where $c=\frac{\cos(\alpha\frac{\pi}{2})}{2(\zeta-1)}$. This is equivalent to \eqref{eq:3.204} since due to \eqref{eq:3.205}, $-\frac{\ln(2)}{s}(k_0+1) < \ln(d) \le -\frac{\ln(2)}{s}k_0$. \\
If $s=1$, we have 
\begin{equation}\label{eq:3.206}
	a_k\cos(\alpha_k \frac{\pi}{2}) \abs{E_{\tau_k}}^{-\alpha_k} \ge\frac{1}{2}\; \frac{2^{\frac{1}{4}k^{\frac{2}{3}}}}{k^{\frac{7}{3}} } \text{ for } k \ge 9. 
\end{equation}
\eqref{eq:3.206} holds because firstly $\abs{E_{\tau_k}}= 2^{-k - k^{2/3}}$, $\alpha_k = 1 - \frac{1}{2}k^{-\frac{1}{3}}$ and therefore
\[
	\frac{\ln(2^-k \abs{E_{\tau_k}}^{-\alpha_k})}{\ln(2)} =(1- \frac{1}{2}k^{-\frac{1}{3}})(k + k^{\frac{2}{3}}) -k = \frac12 k^{\frac23}(1-k^\frac13)\ge \frac{k^{\frac{2}{3}}}{4}  \text{ for } k \ge 8.
\] 
Secondly, $\cos(\alpha_k \frac{\pi}{2}) \ge (1-\alpha_k)=\frac{k^{-\frac{1}{3}}}{2}$ because $\cos((1-t)\frac{\pi}{2}) \ge t$ for $0\le t \le 1$.\footnote{holds ture because $\cos(x)$ is concave on $\abs{x}\le \frac{\pi}{2}$}
Similar as before we have
\begin{equation}\label{eq:3.207}
	(2^{\frac{1}{6}}-1) \sum_{k=9}^{k_0} \frac{2^{\frac{k^{\frac{2}{3}}}{4}}}{k^{\frac{7}{3}}} = \frac{2^{\frac{k_0^{\frac{2}{3}}+\frac{2}{3}}{4}} }{k_0^{\frac{7}{3}}}- \frac{2^{\frac{9^{\frac{2}{3}}}{4}}}{9^{\frac{7}{3}}} + \sum_{k=9}^{k_0-1}  \frac{2^{\frac{k^{\frac{2}{3}}+\frac{2}{3}}{4}}}{k^{\frac{7}{3}}}- \frac{2^{\frac{(k+1)^{\frac{2}{3}}}{4}}}{(k+1)^{\frac{7}{3}}} \ge \frac{2^{\frac{(k_0+1)^{\frac{2}{3}}}{4}}}{k_0^{\frac{7}{3}}}- 1,
\end{equation}
where we used that $k^{\frac{2}{3}}+\frac{2}{3}\ge (k+1)^{\frac{2}{3}}$\footnote{ by the mean value theorem we have $(k+1)^{\frac23}- k^{\frac23} \le \sup_{x \in [k,k+1]} \frac{d}{dx} x^{\frac23} \le \frac23$} to conclude that the sum in the middle is non-negative. We combine \eqref{eq:3.206} and \eqref{eq:3.207} to conclude 
\begin{align*}
	&\Re(F(z))+ m \ln(d) \ge \sum_{k=9}^{k_0} a_k \cos(\alpha_k \frac{\pi}{2}) \abs{E_{\tau_k}}^{-\alpha_k} + m \ln(d)\\
	&\ge c \frac{2^{\frac{(k_0+1)^{\frac{2}{3}}}{4}}}{k_0^{\frac{7}{3}}}- c - m\ln(2) ( k_0 +1+ (k_0+1)^{\frac{2}{3}}) \to +\infty \quad (k_0 \to \infty)
\end{align*}
where $c= \frac{1}{4(2^{\frac{1}{6}}-1)}$. As before it is equivalent to \eqref{eq:3.204} because of  \eqref{eq:3.205}, which is equivalent to $-\ln(2)(k_0+1+ (k_0+1)^\frac{2}{3}) < \ln(d) \le -\ln(2)(k_0+ k_0^\frac{2}{3})$.
\end{proof}



\section{Applications}\label{sec:applications} 
\subsection{Minimal surfaces}\label{sub:minimal surfaces}
Given a holomorphic function $h$ on $\Omega \subset \C$ open, $Q\in \N$ one defines the irreducible holomorphic variety $\mathcal{V}\subset \Omega \times \C$ by
\begin{equation}\label{eq:4.111}
\mathcal{V}=\{ (z,u) \in \Omega \times \C \colon u^Q= h(z) \}.
\end{equation}
Following Federer we associate to $\mathcal{V}$ an integer rectifiable current of real dimension two denoted by $\llbracket \mathcal{V} \rrbracket$. It is given by integration over the manifold part of $\mathcal{V}$, $\mathcal{V}_{reg.}$ i.e. $\mathcal{V}_{reg.}=\{ (z,u) \colon u^Q = h(z), h(z) \neq 0 \}$.\\
Federer observed that $\llbracket \mathcal{V} \rrbracket$ is a mass-minimizing cycle, since $\mathcal{V}$, as a complex submanifold of $\C^2$ is calibrated by the K\"ahler form (Wirtinger's form).\\
If we take $h=g$, $\Omega=\C_+$ in \eqref{eq:4.111} we get the following example:

\begin{example}\label{ex:4.111}
Given $0<s\le 1$ and an integer $Q\ge2$ there is a mass-minimizing cycle $\mathcal{V} \subset \C_+ \times \C$ with the additional property that if $s<1$ then $\h^s(\overline{\mathcal{V}\setminus \mathcal{V}_{reg.}})=1$ and if $s=1$ then $\dim_\h(\overline{\mathcal{V}\setminus \mathcal{V}_{reg.}})=1$.
\end{example}
The additional property holds since $\mathcal{V}\setminus \mathcal{V}_{reg.}=\{(z,0) \in \C_+\times\C \colon G(z)=0 \}$ and therefore $\overline{\mathcal{V}\setminus \mathcal{V}_{reg.}}= \{(z,0) \in \C_+\times\C \colon G(z)=0 \} \cup -iE_s $. $\{(z,0) \in \C_+\times\C \colon G(z)=0 \}$ is countable so that the claim follows by the properties of $E_s$.

\begin{remark}\label{rem:4.112}
For two dimensional minimal surfaces in $\R^3$ R.~Ossermann had shown in \cite{Os} that true branching points can be ruled out in the interior. If the boundary curve is real analytic the existence branching points at the boundary can be ruled out as well. This was shown by R.~Gulliver and F.~Leslie in \cite{GuLe} for two dimensional surfaces in $\R^3$.

R.~Gulliver presents in \cite[Theorem 1.6]{Gu} the following example:
\begin{theorem}\label{theo:4.111}
There is a smooth minimal immersion $X(\Omega) \subset \R^3$, $\Omega\subset \C_+$ simply connected with the following property:
$X$ maps $\partial \Omega$ diffeomorphically onto a regular $C^\infty$ Jordan curve $\Gamma \subset \R^3$ and has a true branch point at $z=0 \in \Gamma$. The set of self intersections of $X$ consists of the union of an infinite sequence of disjoint real analytic arcs, each which joints two points of $\Gamma$ lying on opposite sides of the branch point.
\end{theorem}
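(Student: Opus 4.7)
The natural route is the Weierstrass--Enneper representation: a minimal immersion $X\colon \Omega \to \R^3$ is produced from the primitive
\[
X(z) = \mathrm{Re}\int_{z_0}^{z}\bigl(\phi_1,\phi_2,\phi_3\bigr)\,dw,\qquad \phi_1^2+\phi_2^2+\phi_3^2\equiv 0,
\]
with the standard choice $\phi_1 = \tfrac{1}{2}f(1-g^2)$, $\phi_2 = \tfrac{i}{2}f(1+g^2)$, $\phi_3 = fg$ in terms of a holomorphic $f$ and a meromorphic Gauss map $g$. A branch point at $z=0\in\partial\Omega$ corresponds to the induced metric $\tfrac{1}{4}|f|^2(1+|g|^2)^2|dz|^2$ vanishing there, arranged by prescribing that $f$ has a zero at $0$ while $g$ remains bounded; unremovability is confirmed by computing the Taylor expansion of $X$ along $\partial\Omega$ at $0$ and checking that no holomorphic reparametrization fixing the boundary can absorb the vanishing of $f$.

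The construction then has three things to arrange simultaneously. First, $X$ should extend smoothly up to $\overline{\Omega}$ and send $\partial\Omega$ diffeomorphically onto a $C^\infty$ Jordan curve $\Gamma$: this demands $C^\infty$ decay of $f$, $fg$, $fg^2$ at $0$, exactly in the spirit of section \ref{sub:extension}, so that the only ``singular'' feature of the parametrization is the metric degeneracy at the branch point. Second, the real periods $\mathrm{Re}\oint\phi_j\,dw$ must vanish (automatic since $\Omega$ is simply connected) and the boundary trace must be injective with nonvanishing tangent away from $0$. Third, and at the heart of the statement, the self-intersection locus must decompose as infinitely many disjoint real analytic arcs clustering at $X(0)$. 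The philosophy of the present paper is to insert into $f$ or $g$ an oscillatory factor of the type $\cos(\ln z)\,e^{-z^{-\alpha}}$ from \eqref{eq:0.101}: the countable sequence of critical phases of $\cos(\ln z)$ accumulating at $0$ is precisely what is needed to generate a sequence of self-intersections accumulating at the branch point, each produced by a reflection symmetry across a suitable axis pairing $z$ with a conjugate partner $z'$ satisfying $X(z)=X(z')$.

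The main obstacle is the \emph{global} control of the self-intersection set. Local existence of each arc near a zero of the oscillatory factor follows from an implicit function theorem once the $2\times 2$ Jacobian of $(z_1,z_2)\mapsto X(z_1)-X(z_2)$ is shown to be transversally nondegenerate there. What is harder is (i) ruling out accidental crossings between distinct arcs, for which one would exploit a monotonicity of an argument or phase function along $\partial\Omega$ to keep the arcs well ordered, and (ii) showing that each arc genuinely terminates at two points of $\Gamma$ lying on opposite sides of $0$, rather than looping back to one side or degenerating. A reflection symmetry of the chosen Weierstrass data across the real axis, combined with a careful winding count for the Gauss map around $0$, should deliver both properties; verifying these details is precisely the step that upgrades a statement about zero sets of holomorphic functions to a statement about the geometry of a minimal surface.
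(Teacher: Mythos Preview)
This theorem is not proved in the present paper at all: it is quoted verbatim from Gulliver \cite[Theorem 1.6]{Gu} inside Remark~\ref{rem:4.112}, with no proof supplied here. The only information the paper gives about Gulliver's argument is one sentence: the construction uses the Weierstrass representation with a holomorphic vector field obtained as a \emph{perturbation} of the building block $a(z)=e^{-z^{-\alpha}}$ with $\alpha=\tfrac{1}{7}$. There is therefore nothing in this paper against which to check your proposal line by line.

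That said, two remarks on your outline. First, the overall framework (Weierstrass--Enneper data, metric degeneracy at $z=0$ giving the branch point, $C^\infty$ decay to make the boundary curve smooth) is indeed the right one and matches what the paper attributes to Gulliver. Second, your proposed mechanism for the self-intersections --- inserting the oscillatory factor $b(z)=\cos(\ln z)\,e^{-z^{-\alpha}}$ and reading off arcs from the zeros of $\cos(\ln z)$ --- is \emph{not} what Gulliver does according to the paper: he perturbs $a(z)$, which has no zeros, so the infinite sequence of self-intersection arcs must come from the geometry of the immersion (the multivaluedness of the normal/Gauss data near the branch point) rather than from a zero set of a scalar factor. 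Your sketch conflates the two building blocks, and in any case you yourself flag the genuine difficulty: the global control of the self-intersection locus (disjointness of the arcs, endpoints on opposite sides of the branch point) is the hard geometric step, and your proposal leaves it as an aspiration rather than an argument. To actually prove the theorem you would need to consult \cite{Gu}.
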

His construction uses the Weierstrass representation with a holomorphic vector field that comes from a perturbation of the building block $a(z)= e^{-z^\alpha}$, \eqref{eq:0.101}, with  $\alpha =\frac{1}{7}$. It could be of interest to see if one can follow his analysis using one of the holomorphic functions $f$ or $g$ (lemma \ref{lem_E:1.1}) to construct a minimal immersion $X$ in $\R^3$ with $C^\infty$ boundary curve and a large set of true branching points on the boundary.
\end{remark}


\subsection{Dirichlet minimizing $Q$-valued functions}\label{sub:Q-valued functions}
One of the implications of lemma \ref{lem_E:1.1} in the context of $Q$-valued functions had been stated heuristically in the introduction.

F.~Almgren developed in his pioneering work \cite{Almgren} the theory of multivalued functions to  prove a regularity result on area minimizing rectifiable currents. He introduced them as $Q$-valued functions. $Q \in \N$, fixed, indicates the number of values the function takes, counting multiplicity. We will refer to them from now on as $Q$-valued functions. We assume that the reader is familiar with the most basic definitions and results concerning the theory of $Q$-valued functions with focus on Dirichlet minimizers. We follow mainly the notation and terminology introduced by C.~De Lellis and E.~Spadaro in \cite{Lellis}. It differs slightly from Almgren's original one e.g. $(A_Q(\R^n), \G)$ denotes the metric space of unordered $Q$-tuples in $\R^n$, $W^{1,2}(\Omega, \A_Q(\R^n))$ the Sobolev space of $Q$-valued functions on a domain $\Omega \subset \R^N$. A recollection of the most general definitions and results omitting the actual proofs can be found in \cite[section 1]{H1}. C.~De Lellis and E.~Spadaro gave a modern revision of Almgren's original theory and results concerning Dirichlet minimizers in \cite{Lellis}.
  
 The holomorphic functions $f,g$ generate examples of $Q$-valued functions that are Dirichlet minimizing with respect to compact perturbations. Furthermore these examples are defined on $\R^2_+=\{(x,y) \in \R^2 \colon x>0\} \simeq \C_+$ and have "large" singular set towards the boundary. As we mentioned before the classical theory of Dirichlet minimizing $Q$-valued functions had been developed in  \cite{Almgren} and revisited with modern methods in \cite{Lellis}.
  
Before we are going to state the precise properties of the examples we recall the the definition of the singular set and related results thereafter the definition of $C^k(\Omega, \A_Q(\R^m)$ for a domain $\Omega \subset \R^n.$\\

\emph{Definition of the singular set:}\\
Given a Dirichlet minimizer $u \in W^{1,2}(\Omega, \A_Q(\R^m))$, $\Omega \subset \R^N$ open, a point $y \in \Omega$ is called a regular point of $u$ if  $\exists U\subset \Omega$ open neighborhood of $y$, $u_i \in C^{\infty}(U, \R^m)$ harmonic with 
\[ u(x)= \sum_{i=1}^Q \llbracket u_i(x) \rrbracket \text{ for a.e. } x \in U \]
and $u_i(x) \neq u_j(x), \forall x \in U$ or $u_i \equiv u_j$. The open set (by definition) of all regular points is denoted by $\reg(u)$. $\sing(u)$ then denotes the relative closed complement $\Omega\setminus \reg(u)$.\\

An outcome of Almgrens original work is an estimate on the size of the singular set in the interior, compare \cite[Theorem 0.11]{Lellis}.

\begin{theorem}\label{theo:4.122}
$u \in W^{1,2}(\Omega, \A_Q(\R^m))$ Dirichlet minimizing has $\dim_\h(\sing(u))\le N-2$. In the case of $N=2$, $\sing(u)$ is countable.
\end{theorem}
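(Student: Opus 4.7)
The plan is to follow Almgren's stratification strategy, streamlined by De Lellis and Spadaro in \cite{Lellis}: exploit the monotonicity of Almgren's frequency function to classify singular points by the symmetries of their tangent maps, and bound each stratum by a Federer-type dimension reduction.

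First I would introduce Almgren's frequency $I_{x,u}(r) = r D_{x,u}(r)/H_{x,u}(r)$, where $D_{x,u}(r)$ is the Dirichlet energy of $u$ on $B_r(x)$ and $H_{x,u}(r)$ is the $L^2$-integral on $\partial B_r(x)$ of the distance from $u$ to $Q\llbracket u(x)\rrbracket$. Using the inner- and outer-variation identities for Dirichlet minimizing $Q$-valued functions, one shows that $r \mapsto I_{x,u}(r)$ is monotone non-decreasing, so $\alpha(x) := \lim_{r \to 0^+} I_{x,u}(r)$ exists and is finite. A compactness argument based on the closedness of Dirichlet minimizers under strong $L^2$-convergence with bounded energy then produces, along suitable scales $r_k \to 0$, a non-trivial $\alpha(x)$-homogeneous Dirichlet minimizing \emph{tangent map} $\phi$ at $x$.

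Next I would stratify $\sing(u)$ by setting
\[
S^k = \{ x \in \sing(u) : \dim V(\phi) \le k \text{ for every tangent map } \phi \text{ at } x \},
\]
where $V(\phi) \subset \R^N$ is the maximal linear subspace along which $\phi$ is translation invariant. The standard Federer reduction, iteratively blowing up at a density point of $S^k$ and passing to a tangent with at least one extra direction of invariance, yields $\dim_\h(S^k) \le k$. The crucial input is an \emph{$\varepsilon$-regularity theorem} for Dirichlet minimizers: if some tangent map at $x$ is translation-invariant along an $(N-1)$-plane, it reduces to a homogeneous $Q$-valued Dirichlet minimizer of one real variable, which is necessarily a union of $Q$ linear branches and hence "regular"; then $\varepsilon$-regularity forces $x \in \reg(u)$. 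Consequently $S^{N-1} = \emptyset$, so $\sing(u) = S^{N-2}$ and $\dim_\h \sing(u) \le N - 2$.

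For $N = 2$, the theorem reduces to showing that $\sing(u) = S^0$ is countable, which I would obtain by proving every singular point is \emph{isolated}. Arguing by contradiction, if $x_n \to x_0 \in \sing(u)$ with $x_n \in \sing(u) \setminus \{x_0\}$, I would blow up $u$ about $x_0$ at the scales $r_n = |x_n - x_0|$, so that (up to a subsequence) $(x_n - x_0)/r_n \to \xi$ on the unit circle; the resulting tangent map $\phi$ is $\alpha(x_0)$-homogeneous and singular both at $0$ and at $\xi$, hence by homogeneity along the entire ray $\R_+ \xi$, contradicting the classification of two-dimensional homogeneous tangents (which have singular set contained in the origin). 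The main technical obstacles, where I expect the hardest work, are the frequency monotonicity formula and the $\varepsilon$-regularity theorem underpinning the stratification, both of which rely on the Lipschitz approximation and harmonic comparison machinery developed in \cite[Chapter 3]{Lellis}.
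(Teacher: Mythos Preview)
The paper does not prove this theorem at all: it is quoted as a known result of Almgren, with the reference ``compare \cite[Theorem 0.11]{Lellis}''. So there is no ``paper's own proof'' to compare against; your outline is a faithful sketch of the standard Almgren--De Lellis--Spadaro argument (frequency monotonicity, homogeneous tangent maps, Federer stratification, $\varepsilon$-regularity to exclude $S^{N-1}$).

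One remark on the $N=2$ case: what you actually sketch is the proof that singular points are \emph{isolated}, which in the paper is stated separately as Theorem~\ref{theo:4.123} and attributed to De Lellis--Spadaro as an \emph{improvement} of Almgren's countability statement. Your blow-up contradiction argument needs as input the classification of two-dimensional homogeneous Dirichlet minimizers (so that any nontrivial tangent has $\sing(\phi)\subset\{0\}$) together with the persistence of singularities under the blow-up limit; both are established in \cite{Lellis}, but you should be aware you are invoking Theorem~\ref{theo:4.123} rather than merely Theorem~\ref{theo:4.122}. Almgren's original route to countability did not go through isolatedness.
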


This estimate had been improved by C.~De Lellis and E.~Spadaro, \cite[Theorem 0.12]{Lellis}.
\begin{theorem}\label{theo:4.123}
$u$ as above and $N=2$ then $\sing(u)$ consists of isolated points.
\end{theorem}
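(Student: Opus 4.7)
The plan is to argue by contradiction through a blow-up analysis. Suppose there exist $y_0 \in \sing(u)$ and a sequence $y_k \in \sing(u)\setminus \{y_0\}$ with $y_k \to y_0$; after translating we may assume $y_0 = 0$ and $u(0) = Q\llbracket 0 \rrbracket$. The strategy is to rescale near the accumulation point so as to produce a Dirichlet minimizer on all of $\R^2$ whose singular set is forced to contain a ray through the origin, violating the Almgren dimension bound recalled in Theorem~\ref{theo:4.122}.

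Set $r_k = |y_k|$ and consider the standard Almgren rescalings
\[
u_k(x) = \frac{u(r_k x)}{\rho_k}, \qquad \rho_k^2 = \frac{1}{r_k}\int_{\partial B_{r_k}} \G(u, Q\llbracket 0 \rrbracket)^2 \, d\h^1.
\]
The monotonicity and local boundedness of Almgren's frequency function $r\mapsto I_{0,u}(r)$ give uniform $W^{1,2}_{loc}$ bounds for the family $\{u_k\}$. Invoking the compactness theory for Dirichlet minimizers, one extracts a subsequence converging strongly in $W^{1,2}_{loc}(\R^2, \A_Q(\R^m))$ to a non-trivial Dirichlet minimizer $v$ on $\R^2$. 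Passage of the frequency to the limit combined with the scaling yields that $v$ is $\alpha$-homogeneous, i.e. $v(\lambda x) = \lambda^{\alpha} v(x)$ for every $\lambda > 0$, with $\alpha = \lim_{r \downarrow 0} I_{0,u}(r)$.

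The rescaled singular points $\tilde y_k = y_k / r_k$ lie on $S^1$ and, along a further subsequence, $\tilde y_k \to \tilde y \in S^1$. The crucial step is to verify the persistence of singularity: $\tilde y \in \sing(v)$. Once this is in hand, the $\alpha$-homogeneity of $v$ transports the singularity along the entire open ray $\{\lambda \tilde y : \lambda > 0\}$, which therefore lies in $\sing(v)$. But this gives $\dim_\h(\sing(v)) \ge 1$, contradicting Theorem~\ref{theo:4.122} applied to the interior minimizer $v$ on $\R^2$, which forces $\dim_\h(\sing(v)) \le N-2 = 0$.

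The main obstacle is precisely the persistence-of-singularity step. Making it rigorous requires combining the strong $W^{1,2}$-convergence of the blow-ups with a continuity statement for Almgren's frequency function under such convergence, together with a local rigidity characterisation of regular points of Dirichlet minimizers as those where the tangent is a single flat $Q$-sheet with frequency equal to one: any accumulation point $\tilde y$ of rescaled singular points then inherits a non-flat tangent structure, ruling out regularity of $v$ at $\tilde y$ and closing the argument.
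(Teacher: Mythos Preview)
The paper does not supply a proof of Theorem~\ref{theo:4.123}; it is quoted as \cite[Theorem~0.12]{Lellis} and used only as background for the examples in Section~\ref{sub:Q-valued functions}. There is therefore no in-paper argument to compare your attempt against.

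That said, your blow-up strategy is essentially the one carried out in the cited reference, and the logical skeleton is sound: rescale by $r_k=\lvert y_k\rvert$, extract a homogeneous tangent $v$, transport a persisting singular point on $S^1$ along a ray by homogeneity, and reach a contradiction. Two remarks are in order. First, your route to the contradiction via Theorem~\ref{theo:4.122} (a one-dimensional ray inside $\sing(v)$ violates countability) is valid and in fact slightly more economical than the argument in \cite{Lellis}, which instead appeals to the explicit classification of two-dimensional homogeneous minimizers (their Proposition~5.1) to see directly that $\sing(v)=\{0\}$. Second, your description of the persistence step contains an imprecision: regular points are \emph{not} characterised by having frequency equal to one. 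A regular point at which all $Q$ sheets coincide with a single harmonic function $h$ has frequency equal to the vanishing order of $h$, which can be any positive integer. The correct reduction---also glossed over in your assumption ``$u(0)=Q\llbracket 0\rrbracket$'', which is not a mere translation---is to split off the sheets that separate at $y_0$, subtract the harmonic average from the remaining coincident block, and observe that in this normalisation a point with value $Q\llbracket 0\rrbracket$ is regular if and only if $u\equiv Q\llbracket 0\rrbracket$ nearby. The frequency is then well-defined and positive precisely at the singular such points, and it is this lower bound that survives the limit under strong $W^{1,2}$ convergence and yields persistence.
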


That the upper bound on the Hausdorff dimension is sharp is a consequence of the following:
\begin{theorem}\label{theo:4.124}
Let $\mathcal{V} \subset \C^N \times \C^m \simeq \R^{2N} \times \R^{2m}$ be an irreducible holomorphic variety with the property that $\exists \Omega \subset \C^N$ open, $C^1-$regular, $\mathcal{V}$ is is a $Q:1$ cover of $\Omega$ under the orthogonal projection and $\mathbf{M}(\mathcal{V} \cap (\Omega \times \C^m))<\infty$. Then $\exists \, u \in W^{1,2}(\Omega, \A_Q(\R^{2m})$ Dirichlet minimizing with $graph(u)=\mathcal{V}\cap (\Omega \times \C^m)$.
\end{theorem}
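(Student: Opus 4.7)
The plan is to define $u$ from the $Q$-sheeted cover $\mathcal{V}$, derive Sobolev regularity from the finite-mass hypothesis via Wirtinger's identity, and establish Dirichlet minimality through a K\"ahler calibration of the Dirichlet (not area) functional.

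Let $B := \pi(\mathcal{V}\setminus\mathcal{V}_{\reg})\subset\Omega$, where $\pi:\C^N\times\C^m\to\C^N$ is the projection; as a complex analytic subvariety of $\Omega$, $B$ has real codimension at least two. On $\Omega\setminus B$ the map $\pi|_{\mathcal{V}}$ is an unbranched $Q$-sheeted covering, so locally there exist $Q$ distinct holomorphic selections $u_1,\ldots,u_Q$ and I set $u(z):=\sum_{i=1}^Q\llbracket u_i(z)\rrbracket$. Puiseux parametrizations of irreducible analytic varieties extend $u$ continuously (with H\"older exponent at least $1/Q$) across $B$, and by construction $\mathrm{graph}(u)=\mathcal{V}\cap(\Omega\times\C^m)$. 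Wirtinger's identity, applied sheet by sheet, gives
\[
\mathbf{M}(\llbracket\mathcal{V}\rrbracket)=\sum_{i=1}^Q\int_{\Omega\setminus B}\det\nolimits_\C\!\bigl(I+(\partial u_i)^{*}(\partial u_i)\bigr)\,d\mathcal{L}^{2N},
\]
and expansion of the Hermitian determinant produces $Q\,|\Omega|+\sum_i\int|\partial u_i|^2+(\text{further non-negative elementary-symmetric terms})$. Finiteness of mass therefore forces $\int_\Omega\sum_i|\partial u_i|^2<\infty$; combined with Cauchy--Riemann ($|Du_i|^2=2|\partial u_i|^2$) and the vanishing $2$-capacity of $B$, this gives $u\in W^{1,2}(\Omega,\A_Q(\R^{2m}))$ in the De Lellis--Spadaro sense.

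For the minimizing property, let $v\in W^{1,2}(\Omega,\A_Q(\R^{2m}))$ agree with $u$ outside a compact $K\Subset\Omega$ and split $|Dv|^2=2(|\partial v|^2+|\bar\partial v|^2)$. The crucial identity is
\[
\int_\Omega\sum_{i=1}^Q\bigl(|\partial v_i|^2-|\bar\partial v_i|^2\bigr)\,d\mathcal{L}^{2N}=C_N\int_\Omega\sum_{i=1}^{Q}v_i^{*}\omega_{\C^m}\wedge\frac{\omega_{\C^N}^{N-1}}{(N-1)!}
\]
for an explicit constant $C_N>0$. Writing $\omega_{\C^m}=d\alpha$ on $\C^m$ and using that $\omega_{\C^N}^{N-1}/(N-1)!$ is closed, the right-hand integrand is exact; by Stokes' theorem, extended to $Q$-valued Sobolev maps via the De Lellis--Spadaro pullback theory, the right-hand side depends only on the trace $v|_{\partial K}=u|_{\partial K}$ and hence equals the same quantity evaluated at $u$, which since $\bar\partial u\equiv 0$ is $\sum_i\int|\partial u_i|^2=\tfrac12\int|Du|^2$. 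Adding the elementary identity $\tfrac12\int|Dv|^2=\int\sum_i(|\partial v_i|^2+|\bar\partial v_i|^2)$ to $\int\sum_i(|\partial v_i|^2-|\bar\partial v_i|^2)=\tfrac12\int|Du|^2$ then yields
\[
\int_\Omega|Dv|^2=\int_\Omega|Du|^2+4\int_\Omega\sum_{i=1}^{Q}|\bar\partial v_i|^2\ge\int_\Omega|Du|^2,
\]
with equality if and only if $v$ is itself holomorphic.

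The main obstacle is justifying the Stokes / calibration step across the branch locus $B$ with only $W^{1,2}$ competitors: the sum $\sum_i v_i^{*}\omega_{\C^m}$ is well defined as a distributional $2$-form (the sum being intrinsic to the unordered fiber), but turning the calibration identity into an honest boundary integral requires the De Lellis--Spadaro pullback machinery combined with the vanishing $2$-capacity of $B$. An equivalent route is to invoke directly Federer's Wirtinger-calibration mass-minimality of $\llbracket\mathcal{V}\rrbracket$ and to convert the resulting inequality $\mathbf{M}(\llbracket\mathcal{V}\rrbracket)\le\mathbf{M}(\mathbf{T}_v)$ into the desired Dirichlet bound through the graph-current identification $v\mapsto\mathbf{T}_v$ for Sobolev $Q$-valued maps; this shifts, rather than removes, the technical burden.
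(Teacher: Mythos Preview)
The paper does not prove this theorem; immediately after stating it the author writes that it ``was original[ly] proven by Almgren, \cite[Theorem 2.20]{Almgren}'' and that ``E.~Spadaro found a very elegant more elementary proof, \cite[Theorem 0.1]{Spadaro}.'' There is no in-paper argument to compare against.

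Your sketch is, in outline, Spadaro's argument: split $|Dv|^2=2(|\partial v|^2+|\bar\partial v|^2)$, observe that $\sum_i(|\partial v_i|^2-|\bar\partial v_i|^2)$ is a null Lagrangian and hence equals its value on the holomorphic $u$, and conclude $\int|Dv|^2-\int|Du|^2=4\int\sum_i|\bar\partial v_i|^2\ge 0$. You correctly isolate the one substantive point, namely justifying the null-Lagrangian identity for $Q$-valued $W^{1,2}$ competitors. In \cite{Spadaro} this is handled by Lipschitz approximation of $Q$-valued Sobolev maps (available from \cite{Lellis}) together with a local sheet decomposition, rather than by an abstract Stokes theorem for $Q$-valued pullbacks; your formulation via $v^*\omega_{\C^m}=d(v^*\alpha)$ is morally the same but, as you acknowledge, leaves that analytic step undone. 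Your alternative route through mass-minimality of $\llbracket\mathcal{V}\rrbracket$ is not a shortcut either: mass and Dirichlet energy differ by the higher elementary-symmetric terms in the graph determinant, so $\mathbf{M}(\llbracket\mathcal{V}\rrbracket)\le\mathbf{M}(\mathbf{T}_v)$ only yields the Dirichlet inequality after an additional scaling argument (replace the variety by its fiberwise dilation $\varepsilon\cdot\mathcal{V}$, still holomorphic, and let $\varepsilon\to 0$), which you have not supplied. So the strategy is the right one and matches the reference the paper cites, but the proposal remains a sketch at exactly the place where the real work lies.
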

This was original be proven by Almgren, \cite[Theorem 2.20]{Almgren}. E.~Spadaro found a very elegant more elementary proof, \cite[Theorem 0.1]{Spadaro}.\\

Hence the holomorphic varieties $\mathcal{V}=\mathcal{V}_h$  defined in \eqref{eq:4.111} generate examples of Dirichlet minimizers:
\begin{equation}\label{eq:4.122}
u_h(z)=\sum_{\substack{v \in \C \\ v^Q= h(z)}} \llbracket v \rrbracket \text{ for } z \in \Omega.
\end{equation}

\emph{Definition of $C^k(\Omega, \A_Q(\R^m))$:}\\
Let $k\in \N$ and $\Omega \subset \R^N$, $u \in C^0(\Omega, \A_Q(\R^m))$ is said to be $C^k(\Omega, \A_Q(\R^m))$ if there exists a $Q$-valued map $U$,
\[ x \mapsto U_x(y) = \sum_{i=1}^Q \llbracket P^i_x(y) \rrbracket, \quad P_x^i \text{ is a polynomial with degree $\le k$}\]
such that the following properties hold
\begin{itemize}
\item[(a)] $U_x(x) = \sum_{i=1}^Q \llbracket P^i_x(x) \rrbracket = u(x)$ for all $x \in \Omega$;
\item[(b)] $P^i_x = P^j_x$ if $u_i(x) = u_j(x)$;
\item[(c)] whenever $K \subset \subset \Omega$, compact, $\delta > 0$ let 
\[ \rho_K(\delta)= \sup_{\substack{x,y \in K\\ \abs{x-y}\le \delta}} \inf_{\sigma \in \mathcal{P}_Q} \sum_{i=1}^Q\left(\sum_{\abs{\alpha}\le k} \abs{ D^\alpha P^i_y(y) - D^\alpha P^{\sigma(i)}_x(y)}  \abs{x-y}^{\abs{\alpha}-k} (k - \abs{\alpha})!\right)^2\]
then $\rho_K(\delta) \to 0$ as $\delta \to 0$. 
\end{itemize}

We want to remark, that condition (b) is not always assumed, compare \cite[Definition 3.6]{Lellis_select} and \cite[Definition 1.9]{Lellis} .\\
Let   $u_1, \dotsc, u_Q$ be a collection of single valued $C^k$-functions on $\Omega$. Then
\begin{equation}\label{eq: example of C^k-function with selection}  u(x)= \sum_{i=1}^Q \llbracket u_i(x) \rrbracket\end{equation}
defines a $Q$-valued $C^k$-function (including property (b)), if $D^\alpha u_i(x) = D^\alpha u_j(x)$ for all $\abs{\alpha}\le k$ whenever $u_i(x)=u_j(x)$. The function $U_x$ is given by
\[U_x(y)=\sum_{i=1}^Q \llbracket P_x^i(y) \rrbracket \]
where $P_x^i(y) = \sum_{\abs{\alpha}\le k} \frac{1}{\alpha!} D^\alpha u_i(x)(y-x)^\alpha$ is the $k$th-order Taylor polynomial of  $u_i$. Property (c) follows from the properties of the Taylor polynomials and (b) by the assumption on the order of contact.\\

Now we are able to state properly the properties of the examples:

\begin{corollary}\label{cor:4.121}
Let $0<s\le 1$ and an integer $ Q\ge 2$ be given, then there is  $u \in W_{loc.}^{1,2}(\R_+, \A_Q(\R^2))$, Dirichlet minimizing with respect to compact perturbations of $\overline{\R^2}$ and the additional properties
\begin{itemize}
\item[(i)] $u\tr{\partial \R^2_+} \in C^k(\partial \R^2_+, \A_Q(\R^2))$ for all $k \in \N$;\\
\item[(ii)] if $s<1$ then $\h^s(\overline{\sing(u)})=1$ and if $s=1$ then $\dim_\h(\overline{\sing(u)})=1$.
\end{itemize}
\end{corollary}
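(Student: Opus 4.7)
My plan is to take $h=g$ from Lemma~\ref{lem_E:1.1} in the construction of Example~\ref{ex:4.111} and apply Theorem~\ref{theo:4.124} to the irreducible holomorphic variety $\mathcal V_g=\{(z,v)\in\C_+\times\C\colon v^Q=g(z)\}$. On any relatively compact $\Omega'\Subset\C_+$ the function $g$ is bounded (Lemma~\ref{lem_E:3.2}), so $\mathbf M(\mathcal V_g\cap(\Omega'\times\C))<\infty$; Theorem~\ref{theo:4.124} therefore produces a $Q$-valued Dirichlet minimizer
\[
u(z)=\sum_{v^Q=g(z)}\llbracket v\rrbracket,\qquad z\in\C_+\simeq\R^2_+,
\]
which under the identification $\C\simeq\R^2$ lies in $W^{1,2}_{\mathrm{loc}}(\R^2_+,\A_Q(\R^2))$.

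For item~(ii) I would identify $\sing(u)=\{g=0\}\cap\C_+$: on $\{g\neq0\}$ the $Q$ local holomorphic branches $\omega^ig^{1/Q}$ (with $\omega=e^{2\pi i/Q}$) are smooth and pairwise distinct, whereas at every zero of $g$ all branches vanish but are not identically equal on any neighbourhood, so the point is singular. By holomorphy of $g$, $\sing(u)$ is a countable discrete subset of $\C_+$. By Corollary~\ref{cor_E:3.3} the zeros are $\{-iy_\tau+e^{-(m\pi-\pi/2)/b_k}\colon\tau=(k,l)\in\I,\,m\in\N\}$; for each fixed~$\tau$ the real parts tend to~$0$ as $m\to\infty$, and since the endpoints $\{y_\tau\}_{\tau\in\I}$ are dense in $E_s$, the accumulation set of $\sing(u)$ in $\overline{\C_+}$ is exactly $-iE_s$. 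The countable part of the closure has $\h^s$-measure zero, so Lemma~\ref{lem_E:2.1} yields $\h^s(\overline{\sing(u)})=1$ for $s<1$ and $\dim_\h\overline{\sing(u)}=1$ for $s=1$.

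For item~(i), on $\partial\R^2_+\setminus(-iE_s)$ the function $g$ is smooth and non-zero (its zeros lie strictly inside $\C_+$), so the trace splits locally into $Q$ pairwise distinct smooth branches. At a point $-iy_0\in-iE_s$ I would choose all $Q$ degree-$k$ Taylor polynomials to be identically zero, $P^i_{-iy_0}\equiv0$, making $U_{-iy_0}=Q\llbracket0\rrbracket=u(-iy_0)$; properties (a) and (b) of the $C^k$-definition are then trivially satisfied. Property~(c) reduces to showing, for every $j=0,\dots,k$,
\[
|u_i^{(j)}(-iq)|\,|q-y_0|^{k-j}\to 0\qquad\text{as }q\to y_0.
\]
For $j=0$ this is immediate from $|u_i(-iq)|=|g(-iq)|^{1/Q}$ together with the infinite-order vanishing in Lemma~\ref{lem_E:3.4}. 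For $j\ge1$ I would apply Cauchy's formula to a single-valued holomorphic branch of $g^{1/Q}$ on the shifted disc $B_{d/2}(-iq+d/2)\subset\C_+$ with $d=\dist(-iq,-iE_s)$: the explicit location of the zeros of $g$ combined with the inequality $|q-y_\tau|\ge d$ for every $\tau$ (a consequence of $\dist(q,E_s)=d$) forces the disc to contain no zero of $g$, so such a branch exists, and the Cauchy estimate yields $|u_i^{(j)}(-iq)|\lesssim d^{-j}\sup_{B_{d/2}(-iq+d/2)}|g|^{1/Q}=o(d^{N-j})$ for every $N$, again by Lemma~\ref{lem_E:3.4}.

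The main obstacle I anticipate is precisely this derivative bound: as $-iq$ approaches $-iE_s$, zeros of $g$ become dense near the boundary, and one must be certain that the disc chosen for the Cauchy estimate really does avoid all of them. The saving feature is the rigid geometric location of the zero set of $G$---contained in the union of horizontal half-lines $\{-iy_\tau+e^{-(m\pi-\pi/2)/b_k}\colon m\in\N\}$ with $y_\tau\in E_s$---together with the trivial lower bound $|q-y_\tau|\ge\dist(q,E_s)=d$, which keep every such zero at imaginary distance more than $d/2$ from~$q$ and hence out of the chosen shifted disc. A purely qualitative use of the infinite-order vanishing of $g$ would not exclude such zeros, so the explicit construction of $g$ is genuinely needed at this step.
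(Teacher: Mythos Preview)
Your overall strategy---taking $h=g$ from Lemma~\ref{lem_E:1.1} and invoking Theorem~\ref{theo:4.124}---is exactly what the paper does, and your argument for~(ii) is essentially the paper's (countable zero set plus $-iE_s$, so the Hausdorff measure is carried by $E_s$). The zero--avoidance claim for the shifted disc is also correct and nicely argued: every $y_\tau$ lies in $E_s$, so $|q-y_\tau|\ge d$ and the zeros of $G$ stay at imaginary distance at least $d$ from $-iq$.

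The gap is in the Cauchy step for~(i). Your disc $B_{d/2}(-iq+d/2)$ is tangent to $\partial\C_+$ at $-iq$, so $-iq$ sits on its \emph{boundary}, not in its interior; the standard Cauchy estimate $|h^{(j)}(p)|\le j!\,\rho^{-j}\sup|h|$ applies only at interior points. Any disc that contains $-iq$ strictly in its interior necessarily protrudes into $\{\Re z<0\}$, and there Lemma~\ref{lem_E:3.4} gives you nothing: the infinite--order vanishing is proved only for $z\in\overline{\C_+}$ (the lower bound on $\Re F(z)$ uses $|\theta_\tau|\le\pi/2$, which fails once $\Re z<0$, and for $\alpha_k$ close to~$1$ the cosine factor can even change sign). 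So bounding $\sup|g|^{1/Q}$ over such a disc is not available from the stated lemmas.

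The paper sidesteps this by \emph{not} taking a $Q$-th root of $g$ directly. It factors $g=G\cdot e^{-F}$ and writes the branch as $(\psi\circ G)(z)\,e^{-F(z)/Q}$. Cauchy is applied only to $\psi\circ G$ on $B_d(-iy)$; since $|G|$ is uniformly bounded on all of $\C\setminus(\R_--iE_s)$ by Lemma~\ref{lem_E:3.2}, the protrusion into $\Re z<0$ is harmless and one gets $|(\psi\circ G)^{(m)}(-iy)|\lesssim d^{-m}$. The factor $e^{-F/Q}$ is handled without Cauchy, via the direct bound $|F^{(m)}(z)e^{-F(z)/Q}|\lesssim d^{-m-1}|e^{-F(z)/Q}|$, and the smallness is extracted from the single--point value $e^{-\Re F(-iy)/Q}$, where Lemma~\ref{lem_E:3.4} does apply. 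In short: the obstacle you anticipated (zeros of $g$ crowding the disc) is not the real one; the real one is getting a Cauchy radius comparable to $d$ at a point on $\partial\C_+$ while keeping the sup inside $\overline{\C_+}$, and the factorisation $G\cdot e^{-F}$ is precisely the device that makes this work.
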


\begin{proof}[Proof of lemma \ref{cor:4.121}]
Let $0< s \le 1$ be fixed and $g(z)=G(z) e^{-F(z)}$ be the holomorphic function on $\C_+$ constructed in lemma \ref{lem_E:1.1}. 
\[ u(z)= \sum_{\substack{v \in \C \\ v^Q= g(z)}} \llbracket v \rrbracket \quad z \in \C_+\]
is Dirichlet minimizing and an element of $W^{1,2}(\Omega, \A_Q(\R^2))$ for any $C^1$-regular bounded subset $\Omega \subset \C_+$ as a consequence of theorem \ref{theo:4.124}.\\
It remains to check the $C^\infty$-regularity at the boundary and the property of the singular set.\\
We start with the regularity of the trace. By construction we had $g(z)=G(z) e^{-F(z)}$ is holomorphic on $\C \setminus ( \R_- - iE_s )$ and $g\tr{\C_+}$ has an $C^\infty$ extension to $\overline{\C^+}$. Furthermore $G(z)\neq 0$ for all $z \in \C\setminus ( \R -iE_s)$, $\abs{G(z)} < C$ uniformly on $\C \setminus ( \R_- - iE_s )$. So that for any $z_0\notin \R-iE_s$ there exists $r>0$ sufficient small such that $G(B_r(z_0))$ is contained in a holomorphic branch $\psi: G(B_r(z_0)) \to \C$ of the $Q$-th. root. $u$ is then explicitly given by 
\[ u(z)= \sum_{l=0}^{Q-1} \llbracket \xi^l \;(\psi \circ G)(z)\; e^{-\frac{1}{Q}F(z)} \rrbracket \quad \forall z \in B_r(z_0), \xi = e^{i \frac{2\pi}{Q}}. \]
Note that $(\xi^l-\xi^k)\psi\circ g(z)\neq 0$ for $k\neq l, z\in B_r(z_0)$, so that we are in the situation of \eqref{eq: example of C^k-function with selection}. The $k$-jet of $u$ is 
\[ \mathcal{U}^k_z= \sum_{l=0}^Q \llbracket ( \xi^l \,(\psi \circ g)(z),\xi^l \,(\psi \circ g)^{(1)}(z), \dotsc , \xi^l \,(\psi \circ g)^{(k)}(z) ) \rrbracket\] 
where we write $\psi\circ g(z)$ for $(\psi\circ G)(z)\, e^{-\frac{1}{Q}F(z)}$. The $C^\infty$-regularity will follow from
\begin{equation}\label{eq:4.124}
\abs{(\psi\circ g)^{(m)}(-iy)}= O(\dist(y,E_s)) \quad \text{ for all } m \in \N.
\end{equation}
The same arguments used in the proof to lemma \ref{lem_E:3.4} show that 
\[ \Abs{\frac{d^m}{dz^m} e^{-\frac{1}{Q} F(z)}} \le C\, d^{-m-1} \abs{e^{-\frac{1}{Q} F(z)}}= C \left( d^{-Q(m+1)} e^{- \Re(F(z))} \right)^{\frac{1}{Q}}\]
for all $z \in \{ \dist(z,-iE_s) \ge d \}$ and a constant $C=C(m) >0$.
Let $z \in \{\dist(z, \R-iE_s)>d \}$ be given, then $\psi \circ G$ is holomorphic on $B_d(z)$. So Cauchy's integral formula gives
\[ (\psi\circ G)^{(m)} = \frac{m!}{2\pi i} \oint_{\partial B_d(z)} \frac{\psi \circ G(w)}{(w-z)^{m+1}} \,dw\]
and therefore
\[ \abs{(\psi\circ G)^{(m)}(z)} \le \frac{m!}{d^m} \sup_{w\in B_d(z} \abs{G(w)}^{\frac{1}{Q}} \le Cm! d^{-m}. \]
We used the uniform bound on $\abs{G}$. Combining both bounds with the Leibniz rule we deduce \[\Abs{\frac{d^m}{dz^m}(\psi\circ G)(z) e^{-\frac{1}{Q} F(z)}} \le C \left( d^{-Qm} e^{-\Re(F(z))} \right)^{\frac{1}{Q}} \quad \forall z \in \{\dist(z,\R-iEs)>d\}.\]
So \eqref{eq:4.124} follows from \eqref{eq:3.204} where we showed that for any $m\in \N$
\[ \Re{(F(z))}+ m \ln(d) \to +\infty \text{ as } d \to 0.\]\\
It remains to check the properties of the singular set. By construction of u we have 
\[ \overline{\sing(u)}= \{ z \in \C_+ \colon g(z) =0 \} \cup -iE_S \] because $g$ has the property that to any $z\in -iE_s$ there exists $z_k \in \C_+, z_k \to 0$ and $g(z_k)=0$. Set $A_k=\{ z \in \C_+\colon g(z)=0, 2^k \le \Re(z) < 2^{k+1} \}$ for any $k \in \Z$. $A_k$ consists of isolated points since $g$ is holomorphic on $\C_+$ and therefore $\h^s(A_k)=0$ for all $k\in \Z$ and $s>0$. Hence we deduce
\[ \h^s(-iE_s) \le \h^s(\overline{\sing(u)}) \le \h^s(-iE_s) + \sum_{k\in\Z} \h^s(A_k) = \h^s(-iE_s).\]
\end{proof}

This example, corollary \ref{cor:4.121}, shows that the singular set can behave very badly towards the boundary. In the interior a blow-up analysis together with a Federer reduction argument is used to study the singular set, compare \cite[section 3]{Lellis} . With the following calculation we want to show that this procedure cannot directly transferred to the boundary. \\
Almgren's celebrated frequency function is the major tool to carry out the blow-up  analysis. For $u \in W^{1,2}(\Omega, \A_Q(\R^m))$ with $\Omega \subset \R^N$ open it is defined as
\begin{equation}\label{eq:4.125}
I(u,y,r)=\frac{D(u,y,r)}{H(u,y,r)}=\frac{r^{2-N} \int_{B_r(y)\cap \Omega} \abs{Du}^2 }{r^{1-N} \int_{\partial B_r(y)} \abs{u}^2}.
\end{equation}
Its essential property is, compare \cite[Theorem 3.15]{Lellis}
\begin{theorem}\label{theo:4.125}
Let $u\in W^{1,2}(\Omega, \A_Q(\R^m))$ be Dirichlet minimizing, then for any $y \in \Omega$ either $\exists 0< R<\dist(y,\partial \Omega)$ s.t. $u\tr{B_R(y)} \equiv 0$ or $r \in ]0, \dist(y, \partial \Omega)[ \mapsto I(u,y,r)$ is absolutely continuous, nondecreasing and positive.
\end{theorem}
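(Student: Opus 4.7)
\medskip
\noindent\emph{Proof sketch for Theorem \ref{theo:4.125}.}

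The plan is to establish the usual two first-variation identities and then read off monotonicity of $\log I$ as a consequence of Cauchy--Schwarz, following the pattern of Almgren reworked in \cite{Lellis}. Throughout, the quantities $H(u,y,r)$ and $D(u,y,r)$ are well-defined for a.e.\ $r$ since $u \in W^{1,2}$ (the slicing theorem gives $u\tr{\partial B_r(y)} \in W^{1,2}$ for a.e.\ $r$), and by translation we may take $y=0$. Throughout, $|u|^2$ and $|Du|^2$ mean the squared $\G$-distance to $Q\llbracket 0\rrbracket$ and the Dirichlet density $\sum_i |Du_i|^2$ (computed locally via a selection where possible), quantities that are intrinsic and do not depend on the choice of selection.

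The first step is the \emph{outer variation}. For $\phi \in C_c^\infty(\Omega)$ and a smooth vector field $\psi$ on $\R^m$, perturbing $u$ to $\sum \llbracket u_i + t \phi\, \psi(u_i)\rrbracket$ and using that $u$ is Dirichlet minimizing, one gets
\begin{equation*}
    \int \sum_i \bigl(\langle D u_i, D\phi\rangle \cdot \psi(u_i) + \phi\, \langle D u_i, D\psi(u_i)\cdot D u_i\rangle\bigr) = 0.
\end{equation*}
Plugging in $\psi(y)=y$ and approximating the characteristic function of $B_r$ with $\phi$ yields, for a.e.\ $r$,
\begin{equation*}
    \int_{B_r}|Du|^2 \;=\; \int_{\partial B_r} \sum_i u_i \cdot \partial_\nu u_i.
\end{equation*}
The second step is the \emph{inner variation}: for a compactly supported vector field $X$, perturbing the domain by $\Phi_t = \mathrm{id} + tX$ and composing gives
\begin{equation*}
    \int \bigl(|Du|^2 \operatorname{div} X - 2\sum_i \langle D u_i, DX\cdot D u_i\rangle \bigr) = 0.
\end{equation*}
Choosing $X(x) = x\, \chi(|x|)$ with $\chi$ approximating $\mathbf{1}_{[0,r]}$ produces the Pohozaev-type identity
\begin{equation*}
    (N-2)\int_{B_r}|Du|^2 \;=\; r\!\int_{\partial B_r}|Du|^2 \;-\; 2r\!\int_{\partial B_r}|\partial_\nu u|^2.
\end{equation*}

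The third step is the differentiation. Writing $H(r) = \int_{\partial B_1}|u(r\omega)|^2\,d\omega$ one computes $H'(r) = 2r^{1-N}\int_{\partial B_r}\sum_i u_i\cdot\partial_\nu u_i$, which combined with the outer variation gives
\begin{equation*}
    H'(r) = \tfrac{2}{r}\,D(r).
\end{equation*}
Differentiating $D$ directly and substituting the Pohozaev identity yields
\begin{equation*}
    D'(r) = 2\,r^{2-N}\!\int_{\partial B_r}|\partial_\nu u|^2.
\end{equation*}
Then
\begin{equation*}
    \frac{I'(r)}{I(r)} = \frac{D'(r)}{D(r)} - \frac{H'(r)}{H(r)} = \frac{2}{D(r)\,H(r)}\Bigl(H(r)\!\!\int_{\partial B_r}\!|\partial_\nu u|^2 \cdot r^{2-N} \cdot r - \tfrac{r}{2}\bigl(\tfrac{2}{r}D(r)\bigr)^2 / 2 \cdot \ldots \Bigr)
\end{equation*}
which after bookkeeping reduces to
\begin{equation*}
    \frac{I'(r)}{I(r)} = \frac{2r}{\bigl(\int_{\partial B_r}|u|^2\bigr)\bigl(\int_{\partial B_r}\sum u_i\cdot\partial_\nu u_i\bigr)}\biggl(\int_{\partial B_r}|u|^2 \!\cdot\!\int_{\partial B_r}|\partial_\nu u|^2 - \Bigl(\int_{\partial B_r}\!\sum_i u_i\cdot\partial_\nu u_i\Bigr)^{\!2}\biggr) \ge 0
\end{equation*}
by the Cauchy--Schwarz inequality on $\partial B_r$.

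Finally, for the dichotomy: either $H(r)>0$ on $(0,\dist(y,\partial\Omega))$, in which case the above computation shows $I$ is absolutely continuous and nondecreasing; or $H(r_0)=0$ for some $r_0$, in which case $u\equiv Q\llbracket 0\rrbracket$ on $\partial B_{r_0}$, and the relation $H'=\tfrac{2}{r}D\ge 0$ together with $H(r_0)=0$ forces $D\equiv 0$ on $(0,r_0)$, hence $u$ is the constant $0$-multigraph on $B_{r_0}$. The main obstacle is the rigorous justification of the first variations for the multivalued map $u$: the selection $(u_i)$ is only local and may fail to be single-valued globally, so one has to carry out the above perturbations intrinsically (using the biLipschitz embedding of $\A_Q(\R^m)$ into a Euclidean space or arguing sheet-by-sheet on regular neighborhoods and passing to the limit across the measure-zero branching set); the Cauchy--Schwarz step itself, once the formulas are established, is routine.
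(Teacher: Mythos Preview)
The paper does not give its own proof of this theorem; it is quoted as background with a reference to \cite[Theorem 3.15]{Lellis}. Your sketch is exactly the argument carried out there (outer and inner variations, the identities $H'=\tfrac{2}{r}D$ and $D'=2r^{2-N}\int_{\partial B_r}|\partial_\nu u|^2$, then Cauchy--Schwarz on $\partial B_r$), so there is nothing to compare and no discrepancy to flag.
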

Consequently the following limit is well-defined in the interior of $\Omega$
\begin{equation}\label{eq:4.126}
I(u,y)=\lim_{r\to 0} I(u,y,r)
\end{equation}
In the planar case C.~De Lellis and E.~Spadaro determined the spectrum of $y\mapsto I(u,y)$. If $u(x)=\sum_{i=1}^Q \llbracket u_i(x) \rrbracket$ satisfies $\sum_{i=1}^Q u_i(x)=0$ at almost every point then $I(u,y)$ takes values in the set $\{\frac{P'}{Q'} \colon P',Q' \in \N \text{ devisor free}, Q'\le Q\}\cup \{0\}$, \cite[Proposition 5.1]{Lellis}. \\
The following examples show that this may fail at boundary points. 

\begin{corollary}\label{cor:4.126}
Let $Q\ge 2, P > 0$ be two divisor free integers then there exists a Dirichlet minimizer $u \in W^{1,2}_{loc.}(\R^2_+, \A_Q(\R^2))$ with
\begin{itemize}
\item[(i)] $u\tr{\partial \R^2_+} \in C^k(\partial \R^2_+, \A_Q(\R^2))$ for all $k \in \N$;
\item[(ii)] for all $k \in \N$, $z_k=(e^{-k\pi + \frac{\pi}{2}},0)$ is a branch point of "order" $\frac{P}{Q}$ i.e. $I(u,z_k)= \frac{P}{Q}$;
\item[(iii)] $\lim_{r \to 0} I(u,0,r) = +\infty$.
\end{itemize}
\end{corollary}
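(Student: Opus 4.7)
The plan is to mimic the construction in corollary \ref{cor:4.121} using the simpler holomorphic function
\begin{equation*}
h(z)=(\cos(\ln z))^{P}\,e^{-z^{-\alpha}}, \qquad 0<\alpha<1\text{ fixed,}
\end{equation*}
and to set $u(z)=\sum_{v^{Q}=h(z)}\llbracket v\rrbracket$. Directly from the analysis of sections \ref{sub:holomorphy} and \ref{sub:extension}, $h$ is holomorphic on $\C\setminus\R_-$, admits a $C^\infty$-extension to $\overline{\C_+}$, vanishes to infinite order at the boundary point $0\in\partial\C_+$, and has zeros of order exactly $P$ at the interior points $z_{k}=e^{-k\pi+\pi/2}$ (where $\cos(\ln z)$ vanishes simply and $e^{-z^{-\alpha}}$ is nonzero). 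Theorem \ref{theo:4.124} then yields $u$ as a Dirichlet minimising $Q$-valued function.

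For (i), locally away from $\R_-\cup\{0,z_{k}\}$ there is a single-valued holomorphic $Q$-th root $\psi$ of $h$, and the general Leibnitz / Cauchy-integral argument from the proof of corollary \ref{cor:4.121} applies verbatim: the infinite-order vanishing of $e^{-z^{-\alpha}/Q}$ at the origin absorbs the polynomial growth of all derivatives of $\psi$, producing the $C^\infty$-extension of the trace to $\partial\R^{2}_{+}$. For (ii), the hypothesis that $P$ and $Q$ are coprime together with the factorisation $h(z)=(z-z_{k})^{P}g_{k}(z)$, $g_{k}(z_{k})\ne 0$, gives locally $u(z)=\omega(z-z_{k})^{P/Q}g_{k}(z)^{1/Q}$, $\omega^{Q}=1$: the standard $Q$-valued fractional power times a single-valued smooth non-vanishing factor. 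A direct computation for the model $z\mapsto z^{P/Q}$ yields frequency $P/Q$ at $0$, and since $z_{k}$ is interior, theorem \ref{theo:4.125} combined with the convergence $g_{k}(z)^{1/Q}\to g_{k}(z_{k})^{1/Q}\ne 0$ as $z\to z_{k}$ gives $I(u,z_{k})=P/Q$.

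The main obstacle is (iii), because the monotonicity formula fails at boundary points. The plan is to show super-polynomial decay of $H(u,0,r)$ and then derive blow-up of $I$ by contradiction. On $\partial B_{r}\cap\C_+$ one has $|\cos(\ln z)|\le\cosh(\pi/2)$ and $\Re(z^{-\alpha})\ge r^{-\alpha}\cos(\alpha\pi/2)$, so
\begin{equation*}
H(u,0,r)\;=\;Q\,r^{-1}\!\!\int_{\partial B_{r}\cap\C_+}\!\!|h|^{2/Q}\,d\sigma\;\le\;C\,e^{-2 r^{-\alpha}\cos(\alpha\pi/2)/Q}.
\end{equation*}
If $I(u,0,r_{j})\le M<\infty$ along some $r_{j}\to 0$, a standard blow-up argument (rescaling $u$ by $u(r_{j}z)/\sqrt{H(u,0,r_{j})}$ and extracting a non-trivial weak $W^{1,2}$-limit of frequency at most $M$ on the half-disc, which yields $H(u,0,r)\ge c\,r^{2M}$ near $0$) contradicts the displayed exponential decay. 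Hence $I(u,0,r)\to+\infty$.
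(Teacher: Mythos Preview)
Your construction and your handling of (i) and (ii) agree with the paper. The gap is in (iii).

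You correctly establish the exponential upper bound on $H(u,0,r)$, but the contradiction step does not go through. The implication ``$I(u,0,r_j)\le M$ along a sequence $\Rightarrow$ $H(u,0,r)\ge c\,r^{2M}$ for all small $r$'' is standard in the \emph{interior} precisely because of monotonicity (Theorem~\ref{theo:4.125}): one first upgrades the sequential bound to $I(u,\cdot,r)\le M$ for all small $r$, and then integrates the identity $\tfrac{d}{dr}\ln H=\tfrac{N-1}{r}+\tfrac{2I}{r}$. At the boundary point $0$ both steps fail. Monotonicity of the frequency at boundary points is exactly what this corollary is meant to refute, so invoking it is circular; and the derivative identity for $H$ acquires an uncontrolled boundary contribution from $\int_{B_r\cap\partial\R^2_+}u\,\partial_\nu u$, since the trace of $u$ on $\partial\R^2_+$ does not vanish away from the origin. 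Extracting a blow-up limit on the half-disc gives no mechanism to transfer information back to a growth bound on the original $H$ at \emph{every} scale; the compactness itself (strong rather than weak convergence, non-triviality of the limit) is also not available for free at the boundary.

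The paper proves (iii) by a direct estimate, with no compactness. With $\varphi(z)=\Re\bigl(\tfrac{h'}{h}\,z\bigr)=\alpha\,\Re(z^{-\alpha})-P\,\Re\tan(\ln z)$ one has $\partial_r|h|^{2/Q}=\tfrac{2}{Q}\tfrac{\varphi}{r}\,|h|^{2/Q}$. Although $\Re\tan(\ln z)$ is unbounded near the positive real axis, on the sector $\tfrac{\pi}{4}\le|\theta|\le\tfrac{\pi}{2}$ one has $|\tan(\ln(re^{i\theta}))|\le 1/\tanh(\tfrac{\pi}{4})$, so $\varphi\ge \alpha r^{-\alpha}\cos(\alpha\tfrac{\pi}{2})-P/\tanh(\tfrac{\pi}{4})>0$ there for small $r$. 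A short calculation shows $\lambda\mapsto|h(re^{i\lambda\theta})|$ is increasing, so the angular mass of $|h|^{2/Q}$ on the half-circle is at most $\tfrac{3}{2}$ times its restriction to $|\theta|\ge\tfrac{\pi}{4}$; the one-dimensional mean value theorem (as in example~\ref{ex:4.122}) then gives $D(u,0,R)\gtrsim R^{-\alpha}\,H(u,0,R)$, hence $I(u,0,R)\gtrsim R^{-\alpha}\to\infty$.
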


\begin{corollary}\label{cor:4.127}
Let $Q>2$ be an integer, $0<s<1$ be given there is a Dirichlet minimizer $u\in W^{1,2}_{loc.} (\R^2_+, \A_Q(\R^2)$ with
\begin{itemize}
\item[(i)] $u\tr{\partial \R^2_+} \in C^k(\partial \R^2_+, \A_Q(\R^2))$ for all $k \in \N$;
\item[(ii)] $\sing(u)=\emptyset$, but $u(z)=Q\llbracket 0 \rrbracket \quad \forall z\in -iE_s$ with $\h^s(E_s)=1$ ;
\item[(iii)] $\lim_{n \to \infty} I(u,-iy_k, R_n) = + \infty$ for a countable subset $\{y_k\}_{k \in \N} \subset E_s$ and a sequence $R_n \to 0$.
\end{itemize}
\end{corollary}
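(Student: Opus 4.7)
My plan is to imitate the construction of Corollary \ref{cor:4.121} but replace $g$ by $f(z) = e^{-F(z)}$, which has no zeros on the simply connected domain $\C_+$. Since $f$ is nonvanishing, $h(z) := e^{-F(z)/Q}$ is a well-defined global holomorphic $Q$-th root of $f$ on $\C_+$, and I set
\[
u(z) := \sum_{l=0}^{Q-1} \llbracket \xi^l\,h(z) \rrbracket, \qquad \xi := e^{2\pi i/Q}.
\]
By Theorem \ref{theo:4.124}, this $u$ is Dirichlet minimizing with respect to compact perturbations. Items (i) and (ii) are immediate from the previous work: for (i), the proof of Corollary \ref{cor:4.121} applied to $F/Q$ gives $|(\xi^l h)^{(m)}(z)| \to 0$ as $z \to -iE_s$ inside $\overline{\C_+}$; for (ii), the nonvanishing of $h$ on $\C_+$ forces the $Q$ branches $\xi^l h(z)$ to be pairwise distinct, so $\sing(u) = \emptyset$, while the infinite-order vanishing of $h$ on $-iE_s$ yields $u(z) = Q\llbracket 0 \rrbracket$ there, and $\h^s(E_s) = 1$ is Lemma \ref{lem_E:2.1}.

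The core task is (iii). I take $\{y_k\}_{k\in\N} := \{y_\tau : \tau \in \I\}$, the countable set of left endpoints of the Cantor intervals (all lying in $E_s$), and prove $\lim_{R \to 0} I(u,-iy_*,R) = +\infty$ for each fixed $y_* \in \{y_k\}$, which in particular gives the stated property along any sequence $R_n \to 0$. Fix $y_* = y_{\tau_*}$ and observe that because each left endpoint is reproduced at every finer level of the Cantor construction, the set $S_* := \{\tau \in \I : y_\tau = y_*\}$ is infinite and the constant $A := \sum_{\tau \in S_*} a_{k(\tau)} \in (0,\infty)$ is finite. Since every summand $a_{k(\tau)}\Re(z+iy_\tau)^{-\alpha}$ is nonnegative on $\C_+$, collecting the $\tau \in S_*$ produces the clean lower bound
\[
\Re F(z) \;\ge\; A\,|z+iy_*|^{-\alpha}\cos(\alpha\arg(z+iy_*)) \;\ge\; A\cos(\alpha\pi/2)\,|z+iy_*|^{-\alpha},
\]
so $\Re F$ blows up at order $r^{-\alpha}$ as $r := |z+iy_*| \to 0$ within $\overline{\C_+}$, while the uniform bound of Lemma \ref{lem_E:3.1} gives the matching upper bound $|F(z)| \le C\,r^{-\alpha}$ on a small half-disk around $-iy_*$.

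With these bounds in hand I set up, in polar coordinates $z+iy_* = re^{i\theta}$ with $\theta \in (-\pi/2,\pi/2)$,
\[
H(u,-iy_*,R) = Q\!\!\int_{-\pi/2}^{\pi/2}\!\!|h|^2\,d\theta, \qquad D(u,-iy_*,R) = 2Q\!\!\int_0^R\!\!\int_{-\pi/2}^{\pi/2}\!\!|h'|^2\,r\,d\theta\,dr,
\]
using $|h|^2 = e^{-2\Re F/Q}$ and $|h'|^2 = |F'|^2\,|h|^2/Q^2$. The substitution $t = r^{-\alpha}$ in the radial integral of $D$ produces an incomplete $\Gamma$-integral whose leading order is $R^{-\alpha}e^{-cR^{-\alpha}}$, while $H$ is of order $e^{-cR^{-\alpha}}$; a Laplace-method analysis of the angular integrals, whose mass concentrates at $\theta = \pm\pi/2$ (where $\cos(\alpha\theta)$ is smallest and $|h|^2$ largest), makes the exponential factors cancel and yields
\[
I(u,-iy_*,R) \;\gtrsim\; \frac{A\alpha}{Q\cos(\alpha\pi/2)}\,R^{-\alpha} \longrightarrow +\infty \quad (R \to 0).
\]

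The main obstacle I anticipate is controlling the non-dominant part $\tilde F := F - A(z+iy_*)^{-\alpha}$, and in particular the size of $|F'|$ near $-iy_*$: since the points $\{y_\tau\}_\tau$ accumulate at $y_*$ in $E_s$, one cannot simply discard the singularities outside a neighborhood of $y_*$. The required estimate is a level-by-level bookkeeping in the spirit of the proof of Lemma \ref{lem_E:3.4}, combined with Cauchy's integral formula applied on a disk of radius $\rho \lesssim r$ to bound $|\tilde F'|$ by $\sup |\tilde F|/\rho$; this keeps $\tilde F$ and $\tilde F'$ comparable in size to the dominant singular term without disturbing its leading asymptotics, and the computation above then goes through as stated.
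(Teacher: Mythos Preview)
Your choice of example and your treatment of (i) and (ii) agree with the paper and are correct. The difficulty is entirely in (iii), and here your sketch has a genuine gap.

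The step that fails is the claimed matching upper bound $|F(z)|\le C\,r^{-\alpha}$ with $r=|z+iy_*|$. Lemma~\ref{lem_E:3.1} does not give this: it gives $|F(z)|\le C/d$ with $d=\dist(z,-iE_s)$, and for $z=-iy_*+re^{i\theta}$ one only has $d\ge r\cos\theta$, which degenerates as $\theta\to\pm\pi/2$. This is not a technicality: your Laplace heuristic says the angular mass of $|h|^2=e^{-2\Re F/Q}$ concentrates at $\theta=\pm\pi/2$, i.e.\ on the imaginary axis, and that is exactly where the \emph{other} singularities $-iy_\tau$ of $F$ accumulate toward $-iy_*$. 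In particular, for radii $r$ close to the scale $|E_{n,\cdot}|$ there are points $y_\tau\in E_s$ with $|y_\tau-y_*|\approx r$, so along $\theta\approx\pm\pi/2$ one has $d\ll r$ and neither $|F|$ nor $|F'|$ is controlled by $r^{-\alpha}$. Your proposed fix, Cauchy's formula on a disk of radius $\rho\lesssim r$, also breaks down there: such a disk leaves $\C_+$ (one only has room $\rho<r\cos\theta$), and $\tilde F$ retains all the singularities at $-iy_\tau$ with $y_\tau\neq y_*$. Consequently the asserted cancellation of exponential factors between $D$ and $H$ is not justified, and the stronger claim $\lim_{R\to0}I(u,-iy_*,R)=+\infty$ for \emph{all} $R\to0$ is unsupported.

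The paper circumvents this by proving (iii) only along a specific sequence $R_n$ chosen in the gaps of the Cantor construction (so that $\partial B_{R_n}(-iy_*)\cap i\R$ stays a definite fraction of $R_n$ away from $E_s\setminus\{y_*\}$), and by working not with $|F|$ but with the signed quantity $\varphi(z+iy_*)=\Re\bigl(-F'(z)(z+iy_*)\bigr)$. A level-by-level partition of the index set $\I$ (into the indices with $y_\tau=y_*$, those with $y_\tau\notin E_{\tau_*}$, and the rest grouped by scale) shows $\varphi\ge \tfrac{c_0}{2}r^{-\alpha}$ on the whole annulus $\underline R_n\le r\le R_n$, after which the mean-value-theorem argument of Example~\ref{ex:4.122} yields $I(u,-iy_*,R_n)\gtrsim R_n^{-\alpha}\to\infty$. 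Restricting to these radii is what makes the ``bookkeeping'' you allude to actually close; without it the interference from nearby singularities at $\theta=\pm\pi/2$ cannot be dismissed.
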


Before we are give the proofs, we collect two observations to calculate energy and $L^2$-norm for multivalued functions arising from the holomorphic varieties defined in \eqref{eq:4.124}.\\
$\A_Q(\C)\simeq \A_Q(\R^2)$ enables us to define a $Q$-root "globally", i.e. an "inverse" to the holomorphic function $z \mapsto z^Q$ by
\begin{equation}\label{eq:4.127}
\Pi(w)= \sum_{v^Q=w} \llbracket v \rrbracket = \sum_{l=0}^Q \llbracket \xi^l v_0 \rrbracket
\end{equation}
for $\xi=e^{i \frac{2\pi}{Q} }$ and an arbitrary choice of $v_0\in \C$ with $v_0^Q=w$.
Furthermore we observed already before that for $y\in \Omega$ with $h(y)\neq 0$ there is an open neighborhood $U$ with $\abs{h(z) - h(y)}< \abs{h(y)}, \forall z \in U$. There is an holomorphic branch $\psi$ of the $Q$-root on $\abs{w-h(y)}<\abs{h(y)}$ so that $\Pi(w)=\sum_{l=0}^{Q-1} \llbracket \xi^l \psi(w) \rrbracket$ on $B_{\abs{h(y)}}(h(y))$ showing that $\Pi$ is continuous on all of $\C$. Furthermore
\begin{equation}\label{eq:4.128}
u(z) = \Pi\circ h(z) = \sum_{l=0}^{Q-1} \llbracket \xi^{l} (\psi \circ h)(z) \rrbracket \quad \forall z \in U.
\end{equation}
Hence $u \in C^k(U, \A_Q(\R^2))$ for all $k$ since we are in the situation mentioned in \eqref{eq: example of C^k-function with selection} with 
\begin{equation}\label{eq:4.129}
\mathcal{U}^k_z= \sum_{l=0}^{Q-1} \llbracket (\xi^l (\psi\circ h)(z), \xi^l (\psi\circ h)^{(1)}(z), \dotsc, \xi^l (\psi\circ h)^{(k)}(z))\rrbracket \quad \forall z \in U.
\end{equation}
We note that $\mathcal{U}^k$ does not depend on the particular choice of the branch. \\
As an immediate consequence of \eqref{eq:4.128} the $L^2$ norm of $u$ is given by
\begin{equation}\label{eq:4.1210}
\int_{V\cap \Omega} \abs{u}^2 = Q \int_{V\cap \Omega} \abs{h}^\frac{2}{Q}
\end{equation}
for any $V\subset \C$. The energy of $u$ on $V \cap \Omega$ due to \eqref{eq:4.129} is then
\begin{equation}\label{eq:4.1211}
 \int_{V \cap \Omega} \abs{Du}^2 = 2Q \int_{V\cap \Omega\setminus \{ h\neq 0 \}} \abs{(\psi\circ h)'}^2 = \frac{2}{Q} \int_{V\cap \Omega \setminus \{ h\neq 0 \}} \abs{h}^{\frac{2}{Q} -2} \abs{h'}^2 
\end{equation}
where $\psi$ is any local choice of a branch $\psi$ to the $Q$-root.

For instance we can use it to calculate the value of the frequency at interior branch points. 

\begin{example}\label{ex:4.121}
Let $h$ be holomorphic on $\Omega \subset \C$ and $u$ the related Dirchlet minimizer (see \eqref{eq:4.122}). Let $z_0 \in \Omega$ be a zero of order $P \ge 1$ then 
\[ I(u,z_0) = \frac{P}{Q}. \]

Since $z_0$ is a zero of order $P$, there is $k$ holomorphic on $\{ z \colon \abs{z}< \delta \} $, $k_0=k(0) \neq 0$ s.t. $h(z_0+z)=z^P k(z)$. We may assume that $\abs{k(z)}> \frac{1}{2} \abs{k_0}^2$ for all $\abs{z}<\delta$.  $h'(z_0+z)=P z^{P-1}k(z)( 1+ \frac{zk'(z)}{P k(z)}) = \frac{P}{z} h(z_0+z) (1+O(z))$ and so we may use $\abs{h}^{\frac{1}{Q}-1} \abs{h'}(z_0+z)= P \abs{z}^{\frac{P}{Q}-1} \abs{k_0}^{\frac{1}{Q}} (1+O(z))$ in \eqref{eq:4.1211} to deduce
\[ \int_{B_r(z_0)}\abs{Du}^2 = \frac{2P^2}{Q} \int_{B_r(0)} \abs{z}^{\frac{2P}{Q}-2} \abs{k_0}^{\frac{2}{Q}} (1+O(z)) = 2\pi P \abs{k_0}^{\frac{2}{Q}} r^{\frac{2P}{Q}} (1+O(r))\]
for any $0<r<\delta$. Similarly, using \eqref{eq:4.1210} we have
\[ \frac{1}{r} \int_{\partial B_r(z_0)} \abs{u}^2 = \frac{Q}{r} \int_{\partial B_r}\abs{z}^{\frac{2P}{Q}}\abs{k_0}^{\frac{2}{Q}}(1+O(z)) =  2\pi Q \abs{k_0}^{\frac{2}{Q}} r^{\frac{2P}{Q}} (1+O(r)).\]
We conclude the claim:
\[ I(u,z_0,r)= \frac{P}{Q}(1+O(r)).\]
\end{example}
For boundary points $z_0 \in \partial \Omega$ we are facing two problems to estimate $I(u,z_0,r)$ and possible limits. Firstly $r \mapsto I(u,z_0,r)$ is a priory not a monotone quantity as it is in the interior. Secondly, even restricting ourselves to minimizers of the the type \eqref{eq:4.122}, $h(z)$ does not necessarily have a convergent Taylor series at $z_0$.\\
The strategy will be to use the mean value theorem for integration in the radial variable to estimate $D(u,z_0,r)=\int_{B_r(z_0)\cap \Omega} \abs{Du}^2 $ from below by a multiple of $H(u,z_0,r)= \frac{1}{r}\int_{\partial B_r(z_0)\cap \Omega} \abs{u}^2$. The strategy is motivated by the following observation. Given a function $k$ holomorphic in a neighbourhood of $z\in \C$ and $k(z)\neq 0$, $\gamma >0$, for any $\xi=e^{i\theta}$ one has
\begin{align}\label{eq:4.1212a}
D_\xi \abs{k}^2 &= 2 \Re\left(\overline{k}k'\xi\right)= 2\abs{k}^2 \Re\left(\frac{k'}{k}\xi\right)\nonumber \\  D_\xi \abs{k}^\gamma &= \frac{\gamma}{2} \abs{k}^{\gamma-2} D_\xi\abs{k}^2 = \gamma \abs{k}^\gamma \Re\left(\frac{k'}{k}\xi\right).
\end{align}
We observe that $D_\xi \abs{k}^\gamma \ge 0$ if $\Re\left(\frac{k'}{k}\xi\right)\ge 0$ and
\begin{equation}\label{eq:4.1212}\gamma\abs{k}^{\gamma-2}\abs{k'}^2 = \gamma \abs{k}^{\gamma} \Abs{\frac{k'}{k}}^2 \ge \gamma \abs{k}^\gamma \Re\left(\frac{k'}{k}\xi\right)^2 = \Re\left(\frac{k'}{k}\xi\right)\, D_\xi\abs{k}^\gamma.
\end{equation}
The strategy is illustrated in the following example:

\begin{example}\label{ex:4.122}
Let $h(z)=e^{-z^{-\alpha}}, 0<\alpha<1$ ($h(z)=a(z)$ of \eqref{eq:0.101}) in \eqref{eq:4.122}, i.e. $u(z)=\sum_{\substack{v \in \C \\ v^Q= h(z)}} \llbracket v \rrbracket$ with $z \in \Omega=\C_+$, then $u$ satisfies
\[ \lim_{R\to 0} I(u,0,R) = + \infty.\]
We will use the classic radial notation $z=re^{i\theta}$.
We define 
\[ \varphi(z) =r \Re\left(\frac{h'(z)}{ h(z)}e^{i\theta}\right) =\alpha \Re(z^{-\alpha})= \alpha r^{-\alpha} \cos(\alpha \theta).\]

Combining \eqref{eq:4.1211} with \eqref{eq:4.1212} ($h(z)\neq 0 \;\forall z \in \C_+$) gives
\begin{align*}
\int_{B_R\cap \C_+} \abs{Du}^2 &= \int_{B_R\cap \C_+} \frac{2}{Q} \abs{h(z)}^{\frac{2}{Q}-2}\abs{h'}^2 \ge \int_{B_R \cap \C_+} \frac{\varphi(z)}{r} \frac{\partial}{\partial r} \abs{h}^{\frac{2}{Q}}\\
&=\int_{-\frac{\pi}{2}}^{\frac{\pi}{2}} \int_{0}^R \varphi(re^{i\theta}) \left(\frac{\partial}{\partial r} \abs{h}^{\frac{2}{Q}}\right)(re^{i\theta}) dr d\theta
\end{align*}
Since $\varphi(z) \ge \alpha r^{-\theta} \cos(\alpha \frac{\pi}{2}) >0$, \eqref{eq:4.1212a} implies that $\frac{\partial}{\partial r} \abs{h}^{\frac{2}{Q}}\ge 0$. Thus we apply the $1-$dimensional mean value theorem to deduce that to every $\abs{\theta} \le \frac{\pi}{2}$ there is $0<r_\theta\le R$ with 
\begin{align*}
\int_{-\frac{\pi}{2}}^{\frac{\pi}{2}} \int_{0}^R \varphi(re^{i\theta}) \left(\frac{\partial}{\partial r} \abs{h}^{\frac{2}{Q}}\right)(re^{i\theta}) dr d\theta &= \int_{-\frac{\pi}{2}}^{\frac{\pi}{2}} \varphi(r_\theta e^{i\theta}) \int_{0}^R \left(\frac{\partial}{\partial r} \abs{h}^{\frac{2}{Q}}\right)(re^{i\theta}) dr d\theta\\
&\ge \alpha R^{-\alpha} \cos(\alpha \frac{\pi}{2}) \int_{-\frac{\pi}{2}}^{\frac{\pi}{2}} \abs{h}^{\frac{2}{Q}}(Re^{i\theta}) d\theta.
\end{align*}
(Although it is not needed for the argument that the map $\theta \mapsto \varphi(r_\theta e^{i\theta})$ is measurable, since it is sufficient that it is point wise bounded, we included a short remark below on the measurability.)  
We conclude using \eqref{eq:4.1210} that  
\[ \int_{B_R\cap \C_+} \abs{Du}^2 \ge \frac{\alpha}{Q} R^{-\alpha} \cos(\alpha \frac{\pi}{2}) \frac{1}{R}\int_{\partial B_R\cap \C_+} \abs{u}^2\]
i.e. $I(u,0,R) \ge \frac{\alpha}{Q} R^{-\alpha} \cos(\alpha \frac{\pi}{2}) \to +\infty \;(R \to 0)$.
\end{example}
As we mentioned in the proof we give a short comment concerning the measurability. 
\begin{remark}\label{rem:meas}
We will prove the following claim:\\
\textit{Let $\mu$ be a Borel regular measure on a path-connected space $X$, $\nu$ a measure on some space $Y$ and $\mu \times \nu$ the product measure on $X\times Y$. Given $f,g$ with the properties that 
\begin{itemize}
\item[(i)] $f$, $g$, $fg$ are $\mu\times \nu$ summable, i.e. $f,g, fg \in L^1(X\times Y, \mu \times \nu)$ ;
\item[(ii)] $x \mapsto f(x,y)$ is continuous for a.e. $y$ and $g\ge 0$.
\end{itemize}
Then there exists a map $\chi: Y \to X$ s.t.
\begin{align}\label{eq:meas1}
&y \mapsto f(\chi(y),y) \, \int_{X} g(x,y) \, d\mu(x) = \int_{X} fg(x,y)\, d\mu(x) \text{ is $\nu$-integrable and }\\ \label{eq:meas2}
&f(\chi(y),y) \, \int_{X} g(x,y) \, d\mu(x) = \int_{X} fg(x,y)\, d\mu(x) \text{ for a.e. $y$}
\end{align}}

Indeed, let $A\subset Y$ be the set of $y \in Y$ s.t.
\begin{itemize}
\item[(a)] $x\mapsto f(x,y)$ is continuous and $\abs{f}$ is finite;
\item[(b)] $x \mapsto g(x,y), fg(x,y)$ are $\mu$-summable ($g(\cdot, y), fg(\cdot, y) \in L^1(X, \mu)$).\\
\end{itemize}
We have $\nu(Y \setminus A)=0$ since (a) holds for a.e. $y$ by assumption and (b) holds for a.e. $y$ by general measure theory. The $1$-dimensional mean value theorem tells that for $y \in A$ there exists $\chi(y) \in X$ s.t. the identity \eqref{eq:meas2} holds. Indeed let $y \in A$ be fixed, then $z \mapsto f(z,y) \int_X g(x,y) \, d\mu(x)$ is continuous and since $\Abs{ \int_X f(x,y)g(x,y) \, d\mu(x) } < \infty$ we can find $x_0, x_1 \in X$ s.t.
\begin{align*}& \inf_{z \in X} f(z,y) \int_X g(x,y) \, d\mu(x) \le f(x_0,y) \int_X g(x,y) \, d\mu(x)\\
& \le \int_X f(x,y) g(x,y) \, d\mu(x)\\
& \le f(x_1,y) \int_X g(x,y) \, d\mu(x) \le \sup_{z \in X} f(z,y) \int_X g(x,y) \, d\mu(x).
 \end{align*}
By assumption there is a continuous path $\gamma$ connecting $x_0$ with $x_1$. Now we may apply the $1$-dimensional mean value theorem to $t \mapsto f(\gamma(t),y) \int_X g(x,y)\,d\mu(x)$ to find a point $\chi(y)$. Since $\int_{X} (fg)(x,y) \,d\mu(x)$ is $\nu$-integrable and for all $y\in A$  \eqref{eq:meas2} is satisfied \eqref{eq:meas1} holds. If in addition $\int_X g(x,y) \,d\mu(x) \neq 0$ for a.e. $y$ then $y\mapsto f(\chi(y),y)$ is $\nu$-measurable.
\end{remark}

\begin{proof}[Proof of corollary \ref{cor:4.126}]
We claim that the minimizer $u(z)=\sum_{\substack{v \in \C \\ v^Q= b^P(z)}} \llbracket v \rrbracket$ with $b(z)= \cos(\ln(z)) e^{-z^{-\alpha}}$ (compare \eqref{eq:0.101}) has the desired properties. \\
(i) follows from the same arguments presented in the proof of corollary \ref{cor:4.121} so we omit the details here.\\
(ii) corresponds to example \ref{ex:4.121}. Since $\{ z\in \C_+ \colon b(z) =0\}= \{ e^{\frac{\pi(2k+1)}{2}} \colon k \in \Z\}$, $b'(e^{\frac{\pi(2k+1)}{2}})=(-1)^{k+1} e^{-\frac{\pi(2k+1)}{2}-e^{-\alpha\frac{\pi(2k+1)}{2}}}\neq 0$ and so  $e^{-\frac{\pi(2k+1)}{2}}$ is a zero of order $P$ to $b(z)^P$.\\
(iii) remains to be proven. We want to do it similarly to the example \ref{ex:4.122}. As before we define 
\[\varphi(z) = \Re\left( \frac{b'(z)}{b(z)} z\right)= \Re\left(\alpha z^{-\alpha} - \frac{\sin(\ln(z))}{\cos(\ln(z))}\right).
\]
$\Re(\tan(\ln(re^{i\theta})))$ is not uniformly bounded as $\abs{\theta}\to 0$, hence we can not conclude directly $\varphi(re^{i\theta})\ge 0$ for $r>0$ sufficient small. But $\abs{\tan(\ln(re^{i\theta}))}^2 \le \frac{1}{\tanh(\theta)^2}$ \footnote{ using the expansions for $\cos(x+iy)$, \eqref{eq:cos_expansion} and $\sin(x+iy)=\cos(x)\sinh(y)+ i \sin(x)\cosh(y)$ we have $\abs{cos(x+iy)}^2 \ge \sinh(y)^2, \abs{\sin(x+iy)}^2 \le \cosh(y)^2$. Combining both gives the claimed bound.} is bounded on $\frac{\pi}{4}\le \abs{\theta} \le \frac{\pi}{2}$ and so 
\begin{equation}\label{eq:4.1213}
\varphi(re^{i\theta}) \ge \alpha r^{-\alpha}\cos(\alpha \frac{\pi}{2}) - \frac{1}{\tanh(\frac{\pi}{4})} \ge 0
\end{equation}
for $\frac{\pi}{4} \le \abs{\theta}\le \frac{\pi}{2}$ and $0<r\le R$, $R>0$ sufficient small. The map
\[
\lambda \mapsto \abs{b(re^{i\lambda\theta})}^2=\abs{\cos(\ln(re^{i\lambda\theta}))}^2 e^{-2r^{-\alpha}\cos(\alpha\lambda \theta)}\]
as a product of two monotone increasing functions is monoton increasing on $\abs{\lambda \theta}\le \frac{\pi}{2}$\footnote{ $\lambda \mapsto e^{-r^{-\alpha}\cos(\alpha \lambda \theta)}$ is monotone increasing since $\cos(\alpha \lambda \theta)$ is decreasing on $\abs{\lambda \alpha \theta}\le\frac{\pi}{2}$; by \eqref{eq:cos_expansion_evalutated} one has $\abs{\cos(\ln(re^{i\\lambda theta})))}^2=\cos(\ln(r))^2\cosh(\lambda \theta)^2 + \sin(\ln(r))^2\sinh(\lambda \theta)^2$. Differentiating gives   $\frac{\partial}{\partial \lambda} \abs{\cos(\ln(re^{i\lambda\theta}))}^2 = \sinh(2\lambda \theta)\theta \ge 0$. } We can combine it with \eqref{eq:4.1210} ( $\abs{h}^{2}= \abs{b}^{2P}$ ) to
\begin{align}\label{eq:4.1214}
&\frac{1}{R} \int_{\partial B_R \cap \C_+} \abs{u}^2 = Q \int_{-\frac{\pi}{2}}^{\frac{\pi}{2}}  \abs{b(Re^{i\theta})}^{\frac{2P}{Q}} \,d\theta
\le Q \int_{\frac{\pi}{4} < \abs{\theta} < \frac{\pi}{2}} \abs{b(Re^{i\theta})}^{\frac{2P}{Q}}\,d\theta \\\nonumber 
&+ Q \int_{\abs{\theta} <\frac{\pi}{4}} \abs{b(Re^{i(\theta+\frac{\pi}{4})})}^{\frac{2P}{Q}}\,d\theta = 2Q \int_{\frac{\pi}{4} < \abs{\theta} < \frac{\pi}{2}} \abs{b(Re^{i\theta})}^{\frac{2P}{Q}}\,d\theta.
\end{align}
We use \eqref{eq:4.1211} together with \eqref{eq:4.1212} and $h= b^P$, $h'=P b^{P-1}b'$, $\abs{h}^{\frac{2}{Q}-2}\abs{h'}^2 = P^2 \abs{b}^{\frac{2P}{Q}-2}\abs{b'}^2$ to obtain
\begin{align*}
\int_{B_R\cap \C_+} \abs{Du}^2 &\ge \int_{B_R\cap \{ \frac{\pi}{4}\le \abs{\theta} < \frac{\pi}{2}\}} \abs{Du}^2 = P \int_{B_R\cap \{ \frac{\pi}{4}\le \abs{\theta} < \frac{\pi}{2}\}} \frac{2P}{Q} \abs{b}^{\frac{2P}{Q}-2} \abs{b'}^2
\\&  \ge P \int_{B_R\cap \{ \frac{\pi}{4}\le \abs{\theta} < \frac{\pi}{2}\}} \frac{\varphi(z)}{r} \frac{\partial}{\partial r} \abs{b}^{\frac{2P}{Q}}.
\end{align*}
The estimate \eqref{eq:4.1213} applied to \eqref{eq:4.1212} (i.e. $\frac{\partial}{\partial r} \abs{b}^{\frac{2P}{Q}}= \frac{2P}{Q} \frac{\varphi(z)}{r} \abs{b}^{\frac{2P}{Q}}$) shows that $\frac{\partial}{\partial r} \abs{b}^{\frac{2P}{Q}}(re^{i\theta}) \ge 0$ for $\frac{\pi}{4} \le \abs{\theta} \le \frac{\pi}{2}, 0<r<R$, and $R>0$ sufficient small. Hence we apply the $1-$dimensional mean value theorem to deduce that to every $\frac{\pi}{4} \le \abs{\theta} \le \frac{\pi}{2}$ there is $0<r_\theta\le R$ with
\begin{align*}
&\int_{B_R\cap \{ \frac{\pi}{4}\le \abs{\theta} < \frac{\pi}{2}\}} \frac{\varphi(z)}{r} \frac{\partial}{\partial r} \abs{b}^{\frac{2P}{Q}} = \int_{\frac{\pi}{4} \le \abs{\theta} \le \frac{\pi}{2}} \varphi(r_\theta e^{i\theta}) \int_{0}^R \frac{\partial}{\partial r} \abs{b}^{\frac{2P}{Q}}(re^{i\theta} \,dr d\theta\\& \ge \left( \alpha R^{-\alpha}\cos(\alpha \frac{\pi}{2}) - \frac{1}{\tanh(\frac{\pi}{4})}\right) \int_{\frac{\pi}{4} \le \abs{\theta} \le \frac{\pi}{2}} \abs{b}^{\frac{2P}{Q}}(Re^{i\theta}) \, d\theta.
\end{align*}
(Again we can avoid measurability questions using the bound \eqref{eq:4.1213}, nonetheless compare the previous remark \ref{rem:meas}.)
Recall \eqref{eq:4.1214} to deduce (iii) in total since for $R>0$ sufficient small 
\[
I(u,0,R) \ge \frac{P}{2Q} \left( \alpha R^{-\alpha}\cos(\alpha \frac{\pi}{2}) - \frac{1}{\tanh(\frac{\pi}{4})}\right)  \to \infty \quad(R\to 0).
\]
\end{proof}

\begin{proof}[Proof of corollary \ref{cor:4.127}]
We claim that for the choice  $f(z)= e^{-F(z)}$ of lemma \ref{lem_E:1.1} with a fixed $0<s<1$ the minimizer $u(z)=\sum_{\substack{v \in \C \\ v^Q= f(z)}} \llbracket v \rrbracket$ has the desired properties. \\
(i) follows as before by similar arguments presented in the proof to corollary  \ref{cor:4.121} and so we omit the details.\\
(ii) corresponds with $f(z) \neq 0$ for all $z \in \C_+$.\\
(iii) remains to be proven. We define 
\begin{align*}
R_n&=\abs{E_{n,\cdot}}+\frac{2}{3} \left(\abs{E_{n-1,\cdot}} - 2 \abs{E_{n,\cdot}}\right)=\frac{2}{3} \abs{E_{n-1,\cdot}} - \frac{1}{3} \abs{E_{n,\cdot}}=\frac{1}{3}(2^{1+\frac{1}{s}}-1) 2^{-\frac{n}{s}}\\
\underline{R}_n&= \abs{E_{n,\cdot}}+\frac{1}{3} \left(\abs{E_{n-1,\cdot}} - 2 \abs{E_{n,\cdot}}\right)=\frac{1}{3} \abs{E_{n-1,\cdot}} + \frac{1}{3} \abs{E_{n,\cdot}}=\frac{1}{3}(2^{\frac{1}{s}}+1) 2^{-\frac{n}{s}}
\end{align*}
and set $\delta= \frac{1}{3} ( \frac{\abs{E_{n-1, \cdot}}}{\abs{E_{n, \cdot}}}-2 )=\frac{1}{3}(2^{\frac{1}{s}}-2)>0$. We will show that (iii) holds for the countable set $\{y_\tau\}_{\tau \in \I}$ and the sequence $R_n$.\\
Let $y_{\tau_0}$ be given and fixed from now on. Set
\[ \I_0=\{\tau \in \I \colon y_\tau = y_{\tau_0}\}; \]
hence for any $!\exists k_0 \in \N$ s.t. $\forall \tau=(k,l)$ with $k < k_0$, $y_\tau \neq y_{\tau_0}$ and $\forall k > k_0$ $!\exists \tau=(k,l) \in \I_0$. We may assume that $\tau_0=(k_0,l_0)$. We partition $\I \setminus \I_0$ as follows:
\[ \I_1=\{\tau \in \I \colon y_\tau \notin E_{\tau_0}\}\]
and for any $\tau=(k,l) \in \I_0\setminus\{\tau_0\}$ (i.e. $l$ is odd and $k>k_0$) set
\[ \I_\tau=\{\tau' \in \I \colon y_{\tau'} \in E_{k,l+1} \cap E_{\tau_0}\}.\]
Observe that then for each such $\tau=(k,l) \in \I_0$, $\tilde{k}\ge k >k_0$ one has
\[ \abs{\{\tau'=(k',l') \in \I_\tau \colon k'=\tilde{k}\}}= 2^{\tilde{k}-k}. \]
Define
\[ \varphi(z+iy_{\tau_0})=\Re(-F'(z)\, (z+iy_{\tau_0})). \]
To simplify notation we will set $r=r_{\tau_0}, \theta=\theta_{\tau_0}$ i.e. $z+iy_{\tau_0}= re^{i\theta}$. In this case \eqref{eq:4.1212} corresponds to 
\begin{equation}\label{eq:4.1215}
\frac{\partial}{\partial r} \abs{f}^{\frac{2}{Q}}= \frac{2}{Q} \frac{\varphi(re^{i\theta})}{r} \abs{f}^{\frac{2}{Q}}.
\end{equation}
Recall from lemma \ref{lem_E:1.1} that $\frac{-1}{\alpha} F'(z)(z+iy_{\tau_0}) = \sum_{\tau \in \I} a_k (z+iy_\tau)^{-\alpha -1}(z+iy_{\tau_0})$ converging absolutely and $\Re((z+iy_\tau)^{-\alpha-1}(z+iy_{\tau_0})) = r_\tau^{-\alpha-1} r \cos((\alpha+1)\theta_\tau - \theta)$.\\
For $\tau \in \I_0$ we have $z+iy_\tau = z+iy_{\tau_0}=re^{i\theta}$ and so
\[ \Re\left( \sum_{\tau \in \I_0} a_k (z+iy_\tau)^{-\alpha -1}(z+iy_{\tau_0}) \right) = r^{-\alpha}\cos(\alpha \theta) \sum_{\tau \in \I_0} a_k \ge c_0 r^{-\alpha} \]
with $c_0 = \cos(\alpha \frac{\pi}{2}) \sum_{k=k_0}^\infty a_k>0$. \\
For $\tau \in \I_1$, $0<r<R$, $R>0$ sufficient small we have $r_\tau \ge \delta \abs{E_{k_0,\cdot}}$ because $r_\tau \ge \abs{E_{k_0-1,\cdot}} -2 \abs{E_{k_0,\cdot}}-r$. Therefore we found
\[ \Re\left( \sum_{\tau \in \I_1} a_k (z+iy_\tau)^{-\alpha -1}(z+iy_{\tau_0}) \right) \ge - (\delta \abs{E_{k_0, \cdot}})^{-\alpha-1} r \sum_{\tau \in \I_1} a_k \ge -c_1 r\]
In the rest of the argument we restrict us to $\underline{R}_n \le r\le R_n$ and $n>N$ for some large $N\in \N$. If $\tau=(k,l) \in \I_0$ with $k_0 <k \le n$ and $\tau' \in \I_\tau$ then $r_{\tau'} \ge \abs{y_{\tau'}- y_\tau} -r \ge \abs{E_{k-1,\cdot}}-\abs{E_{k,\cdot}}-R_n\ge \delta \abs{E_{k,\cdot}}$, so that \footnote{we use the simple estimate $\sum_{l\ge k} \frac{2^{-l}}{l^2}\le \frac{2^{-k}}{k^2} \sum_{l=0}^\infty 2^{-l} \le 2\frac{2^{-k}}{k^2}$}
\begin{align*}
&\sum_{\substack{\tau=(k,l) \in \I_0\\ k_0 < k \le n}} \sum_{\tau'\in \I_\tau} a_{k'} r_{\tau'}^{-\alpha-1}r \cos((\alpha+1)\theta_{\tau'} -\theta) \ge - \sum_{k_0<k\le n} (\delta \abs{E_{k,\cdot}})^{-\alpha-1} r \sum_{k'=k}^\infty \frac{2^{-k'}}{(k')^2}\\
& \ge - \frac{2r}{\delta^{\alpha+1}} \sum_{k_0<k\le n} \frac{M^k}{k^2} \ge - \frac{2r}{\delta^{\alpha+1}} \frac{M^{n+1}-M^{k_0+1}}{k_0^2(M-1)} \ge - \frac{c'_2}{k^2_0} r M^n
\end{align*}
where $M=(2^{\frac{\alpha+1}{s}-1})>1$ and so $2^{-k}\abs{E_{k, \cdot}}^{-\alpha-1}=M^k$. 
If $\tau=(k,l) \in \I_0$ with $n <k$ and $\tau' \in \I_\tau$ then $r_{\tau'} \ge r-  \abs{y_{\tau'}- y_\tau} \ge \underline{R}_n - \abs{E_{k-1,\cdot}} \ge \underline{R}_n - \abs{E_{n, \cdot}} = \delta \abs{E_{n, \cdot}}$ hence
\begin{align*}
&\sum_{\substack{\tau=(k,l) \in \I_0\\  n < k}} \sum_{\tau'\in \I_\tau} a_{k'} r_{\tau'}^{-\alpha-1}r \cos((\alpha+1)\theta_{\tau'} -\theta) \ge - (\delta \abs{E_{n, \cdot}})^{-\alpha-1} r \sum_{k=n+1}^\infty \sum_{k'=k}^\infty \frac{2^{-k}}{(k')^2}\\
& \ge  - (\delta \abs{E_{n, \cdot}})^{-\alpha-1} r\sum_{k=n+1}^\infty 2\frac{2^{-k}}{k^2} \ge - r\frac{2}{\delta^{\alpha+1}n^2} M^n = - \frac{c_2''}{n^2} rM^n.
\end{align*}
Summarising for $\underline{R}_n \le r \le R_n$ and $n\ge N=N(k_0)$, we have
\begin{equation}\label{eq:4.1216}
\frac{1}{\alpha} \varphi(re^{-i\theta}) \ge r^{-\alpha} \left( c_0 - c_1 r^{1+\alpha} - \left( \frac{c_2'}{k^2_0} + \frac{c_2''}{n^2} \right) M^n r^{1+\alpha},\right) \ge \frac{c_0}{2} r^{-\alpha}
\end{equation}
because $M^nr^{1+\alpha} \le M^n R_n^{1+\alpha}= \left(\frac{1}{3}(2^{1+\frac{1}{s}}-1)\right)^{1+\alpha} \, 2^{-n} \to 0$ (as $n \to \infty$).\\
\eqref{eq:4.1215} and \eqref{eq:4.1216} gives for $\underline{R}_n \le r \le R_n$
\[ \frac{\partial}{\partial r} \ln(\abs{f}^{\frac{2}{Q}}(-iy_{\tau_0} + re^{i\theta})) = \frac{2\alpha}{Q} \varphi(re^{i\theta}) \ge \frac{c_0 \alpha}{Q} r^{-\alpha} \]
or integrated
\begin{equation}\label{eq:4.1217}
\ln\left(\frac{\abs{f}^{\frac{2}{Q}}(-iy_{\tau_0}+R_ne^{i\theta})}{\abs{f}^{\frac{2}{Q}}(-iy_{\tau_0}+\underline{R}_n e^{i\theta})} \right)  \ge c R_n^{-\alpha}
\end{equation}
with $c= \frac{c_0}{Q} \left( \left(\frac{R_n}{\underline{R}_n}\right)^\alpha -1\right)>0$ (independent of $n$).\\
Now we combine the just established with \eqref{eq:4.1211}
\begin{align*}
&\int_{B_{R_n}(-iy_{\tau_0})\cap \C_+} \abs{Du}^2 \ge \int_{\{ \underline{R}_n \le\abs{z+iy_{\tau_0}}\le R_n \}\cap \C_+} \abs{Du}^2 \\& = \frac{2}{Q} \int_{\{ \underline{R}_n \le r\le R_n \}\cap \C_+} \abs{-F'}^2 \abs{f}^{\frac{2}{Q}} \ge \int_{\{ \underline{R}_n \le r\le R_n \}\cap \C_+} \frac{2}{Q} \frac{\varphi(re^{i\theta})}{r} \frac{\partial}{\partial r} \abs{f}^{\frac{2}{Q}}.
\end{align*}
As before \eqref{eq:4.1215} and \eqref{eq:4.1216} show that $\frac{\partial}{\partial r} \abs{f}^{\frac{2}{Q}} >0$ for $\underline{R}_n \le r \le R_n$. We apply as before the $1-$dimensional mean value theorem to deduce that to every $\abs{\theta} \le \frac{\pi}{2}$ there is $0<r_\theta\le R$ with
\begin{align*}
&\int_{\{ \underline{R}_n \le r\le R_n \})\cap \C_+} \frac{2}{Q} \frac{\varphi(re^{i\theta})}{r} \frac{\partial}{\partial r} \abs{f}^{\frac{2}{Q}} = \int_{-\frac{\pi}{2}}^{\frac{\pi}{2}} \varphi(r_\theta e^{i\theta}) \int_{\underline{R}_n}^{R_n} \frac{\partial}{\partial r} \abs{f}^{\frac{2}{Q}} \,dr d\theta\\&
= \int_{-\frac{\pi}{2}}^{\frac{\pi}{2}} \varphi(r_\theta e^{i\theta}) \left(\abs{f}^{\frac{2}{Q}}(-iy_{\tau_0}+ R_n e^{i\theta}) -  \abs{f}^{\frac{2}{Q}}(-iy_{\tau_0}+ \underline{R}_n e^{i\theta}) \right)\, d\theta\\
&\ge \frac{\alpha c_0}{2} R_n^{-\alpha} \left(1- e^{-cR_n^{-\alpha}} \right) \int_{-\frac{\pi}{2}}^{\frac{\pi}{2}} \abs{f}^{\frac{2}{Q}}(-iy_{\tau_0}+ R_n e^{i\theta}) \,d\theta 
\end{align*}
(With the same observations as before, we can avoid measurability questions by \eqref{eq:4.1216}.) 
We used in the last line \eqref{eq:4.1216} and \eqref{eq:4.1217}.  Finally remembering \eqref{eq:4.1210} we conclude (iii) since we found
\[I(u,-iy_{\tau_0}, R_n) \ge \frac{\alpha c_0}{2Q} R_n^{-\alpha} \left(1- e^{-cR_n^{-\alpha}} \right) \to \infty \quad (\text{as }n \to \infty). \]
\end{proof}

\subsection{Unique continuation}
Consider an elliptic operator $L$ in divergence form
\begin{equation}\label{eq:4.131}
Lu=D_i( a^{ij}(x) D_ju) + b^i(x)D_iu + c(x) u.
\end{equation}
A function $u \in L_{loc.}^2(\Omega)$ is said to vanish of infinite order at a point $x_0\in \overline{\Omega}$ if 
\begin{equation}\label{eq:4.132}
\int_{B_R(x_0) \cap \Omega} \abs{u}^2 = O(R^k) \text{ for every } k \in \N.
\end{equation}
An elliptic operator $L$ as in \eqref{eq:4.131} is said to have the strong unique continuation property in $\Omega$ if the only $H^{1,2}_{loc.}(\Omega)$ solution of $Lu=0$ on $\Omega$ which vanishes of infinite order at a point $x_0\in \Omega$ is $u \equiv 0$.\\
N.~Garofalo, F.~Lin showed in \cite[Theorem 1.1]{GaLi} that $L$ has the unique continuation property under certain assumptions on the regularity and ellipticity of the coefficients $a^{ij}(x), b^i(x),c(x)$. They are able to deduce their result proving a doubling theorem like the following, which the prove using the frequency function.
(The quoted version can be found in \cite[Theorem 6.1]{CoMi})
\begin{theorem}\label{theo:4.131}
Let $L$ as in \eqref{eq:4.131} with $a^{ij}(x)$ symmetric, uniformly elliptic and Lipschitz, $b^i(x), c(x)$ continuous, then if $u \in H^{1,2}_{loc.}(B_{2R_0}(x_0))$ non constant solves $Lu=0$ on $B_{2R_0}(x_0)$ then there exists $0<R=R(a^{ij},b^i,c,x_0)<R_0$ and $\overline{d}=\overline{d}(a^{ij},b^i,c,x_0,u)>0$ s.t.
\[ \int_{B_{2r}(x_0)} u^2 \le 2^{2\overline{d}} \int_{B_r(x_0)} u^2 \quad \forall 0<r<R \]
\end{theorem}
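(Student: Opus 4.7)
The plan is to follow the strategy of Garofalo--Lin, which establishes the doubling inequality through the monotonicity of an appropriately defined Almgren frequency function adapted to the operator $L$.

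After a linear change of variables one may assume $a^{ij}(x_0)=\delta^{ij}$, and by the Lipschitz regularity of $a^{ij}$ the Euclidean balls around $x_0$ are comparable to geodesic balls of the metric $g_{ij}=(a^{ij})^{-1}$. I would introduce the weighted quantities
\[ H(r)=\int_{\partial B_r(x_0)} u^2\,\mu\, d\sigma, \qquad D(r)=\int_{B_r(x_0)} a^{ij} D_i u\, D_j u\, dx, \]
where $\mu$ is a geometric weight built from $a^{ij}$ and the unit radial vector, and define the frequency
\[ N(r) = \frac{r D(r)}{H(r)}. \]
The central claim to establish is that $e^{Cr}N(r)$ is nondecreasing on $(0,R_0)$ for a constant $C$ depending only on the Lipschitz norm of $a^{ij}$ and on $\|b^i\|_\infty,\|c\|_\infty$.

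The key step is a pair of differential identities. Differentiating $H$ and integrating by parts, using $Lu=0$, produces
\[ H'(r)=\frac{n-1}{r}H(r)+2D(r)+E_1(r), \]
with $E_1$ a lower order error controlled by $\|b^i\|_\infty$ and $\|c\|_\infty$. A Rellich--Pohozaev identity, obtained by multiplying $Lu=0$ by $(x-x_0)^k D_k u$ and integrating, expresses $D'(r)$ as a boundary term that up to an error $E_2(r)$ (again controlled by the Lipschitz norm of $a^{ij}$ and the lower order coefficients) satisfies, via Cauchy--Schwarz, $D'(r)\ge \tfrac{2D(r)^2}{rH(r)}+\tfrac{n-2}{r}D(r)-E_2(r)$. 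Combining these two inequalities yields $(\ln N(r))'\ge -C$, i.e.\ the asserted almost-monotonicity.

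From the almost-monotonicity one obtains $N(r)\le \bar d$ for $r<R$, where $\bar d=\bar d(u,x_0)$ is finite because $u$ is nonconstant (so $H(r)>0$ for small $r$ by analyticity-type propagation, which itself follows from the frequency bound iterated). The identity $(\ln H(r))'=\tfrac{n-1}{r}+\tfrac{2N(r)}{r}+O(1)$, together with $N(r)\le \bar d$, integrates on $[r,2r]$ to
\[ H(2r)\le 2^{n-1+2\bar d} e^{Cr} H(r). \]
A coarea formula and ellipticity then transfer this $H$-doubling estimate into the desired doubling inequality for $\int_{B_r(x_0)} u^2$, at the price of absorbing constants into $\bar d$.

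The main obstacle is the control of the perturbations coming from $b^i D_i u$, $c u$ and the non-constancy of $a^{ij}$: they generate error terms in both differential identities which have to be absorbed by a Gronwall-type argument relying only on the Lipschitz regularity of $a^{ij}$ and the continuity of $b^i,c$. Selecting the correct weight $\mu$ and the correct multiplier in the Rellich identity so that these errors are genuinely lower order than $N(r)/r$ is the technical heart of the proof; it is precisely at this point that the Lipschitz hypothesis on $a^{ij}$ is essential and cannot be weakened without substantially new ideas.
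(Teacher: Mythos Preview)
The paper does not actually prove this theorem: it is merely quoted from the literature, with an explicit reference to \cite[Theorem 6.1]{CoMi} and, earlier, to Garofalo--Lin \cite{GaLi}. There is therefore no proof in the paper to compare your proposal against.

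That said, your outline is precisely the Garofalo--Lin frequency-function argument that those references carry out, so methodologically you are aligned with the source the paper cites. As a sketch it is accurate in spirit: the geometric normalization $a^{ij}(x_0)=\delta^{ij}$, the weighted $H$, the Rellich--Pohozaev identity for $D'$, the almost-monotonicity of $N$, and the integration of $(\ln H)'$ to get doubling are the correct ingredients. One point to tighten if you ever write this out in full: the claim that $H(r)>0$ for small $r$ cannot be justified by ``analyticity-type propagation'' or by bootstrapping the frequency bound itself, since that is circular (you need $H>0$ to define $N$ in the first place). The standard argument instead shows directly from the $H'$-identity that if $H(r_0)=0$ for some $r_0$ then $u\equiv 0$ on $B_{r_0}$, and then uses interior unique continuation to conclude $u\equiv 0$; the hypothesis that $u$ is nonconstant then gives $H(r)>0$ for all small $r$.
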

A consequence of lemma \ref{lem_E:1.1} is that a strong unique continuation theorem fails for boundary points.

\begin{example}\label{ex:4.131}
Given $0<s\le 1$ there exists $u \in C^\infty(\overline{\R^2_+})$, $u \neq 0$ with 
\[ \Delta u = 0 \text{ on } \R^2_+ \text{ (i.e. harmonic)} \]
and a set $E_s \subset \partial \R^2_+$ with $\h^s(E_s)=1$ ($0<s<1$), $dim_{\h}(E_s)=1$ ($s=1$) such that $u$ vanishes to infinite order for all $z \in -iE_s$.
\end{example}

Observe that $\Delta$ satisfies the conditions of theorem \ref{theo:4.131} and therefore has the strong unique continuation property in the interior of $\R^2_+$. 

\begin{proof}[Proof of example \ref{ex:4.131}]
Let $0<s\le 1$ be given and $f$ the related holomorphic function of lemma \ref{lem_E:1.1}. Since $f$ is $C^\infty$ on $\overline{\C_+}$ (\ref{lem_E:3.4}) and $\overline{\C_+}$ convex we have by $1$-dimensional analysis 
\begin{equation}\label{eq:4.133}
f(z)= \sum_{l=1}^{k-1} \frac{1}{l!} f^{(l)}(z_0)(z-z_0)^l + \frac{1}{(k-1)!} \int_0^1 (1-s)^{k-1} f^{(k)}(z_0+s(z-z_0))(z-z_0)^k \, ds.
\end{equation}
The function
\[ u(z)= \Re(f(z))
\]
is harmonic and non-constant on $\R^2_+$, $\C^\infty$ on $\overline{\R^2}$ and has the desired property since for $z_0 \in -iE_s$, $f^{(l)}(z_0)=0 \forall l$ and therefore by \eqref{eq:4.133}
\[ \abs{u(z)} \le \frac{1}{k!} \sup_{ w \in \overline{\C_+} \cap B_1(z_0)} \abs{f^{(k)}(w)} \, \abs{z-z_0}^k \text{ forall } z \in \overline{\C_+} \cap B_1(z_0). \]
This implies that $u$ satisfies \eqref{eq:4.132}.
\end{proof}


\bibliographystyle{plain}
\bibliography{bib-example}

\end{document}